\title{\textbf{The influence of weakly $S\Phi$-supplemented subgroups on fusion systems of finite groups}\thanks{\footnotesize \scriptsize\emph{E-mail address:}
      zsmcau@cau.edu.cn\,(S. Zhang) }}
\author{Shengmin Zhang \\
\quad
\\
{\small College of Science,
China Agricultural University,
Beijing 100083, China}}
\date{}
\newtheorem{theorem}{Theorem}[section]
\newtheorem{lemma}[theorem]{Lemma}
\newtheorem{corollary}[theorem]{Corollary}
\theoremstyle{definition}
\newtheorem{definition}[theorem]{Definition}
\let\expandafter\oldproof\csname\string\proof\endcsname
\let\oldendproof\endproof
\renewenvironment{proof}[1][\proofname]{%
  \oldproof[\bfseries\scshape #1]%
}{\oldendproof}
\def\trianglelefteqslant{\ThisStyle{\mathrel{%
  \stackinset{r}{.75pt+.15\LMpt}{t}{.1\LMpt}{\rule{.3pt}{1.1\LMex+.2ex}}{\SavedStyle\leqslant}%
}}}
\renewcommand{\unlhd}{\trianglelefteqslant}
\renewcommand{\leq}{\leqslant}
\renewcommand{\geq}{\geqslant}
\begin{document}
\maketitle
\begin{abstract}
Let $G$ be a finite group and $H$ be a subgroup of $G$. Then $H$ is called a weakly $S\Phi$-supplemented subgroup of $G$, if there exists a subgroup $T$ of $G$ such that $G =HT$ and $H \cap T \leq \Phi (H) H_{sG}$, where $H_{sG}$ denotes the subgroup of $H$ generated by all subgroups of $H$ which are $S$-permutable in $G$. Let $p$ be a prime, $S$ be a $p$-group and $\mathcal{F}$ be a saturated fusion system over $S$. Then $\mathcal{F}$ is said to be supersolvable, if there exists a series of $S$, namely $1 = S_0 \leq S_1 \leq \cdots \leq S_n = S$, such that $S_{i+1}/S_i$ is cyclic, $i=0,1,\cdots, n-1$, $S_i$ is strongly $\mathcal{F}$-closed, $i=0,1,\cdots,n$. In this paper, we investigate the structure of fusion system $\mathcal{F}_S (G)$ under the assumption that certain subgroups of $S$ are weakly $S\Phi$-supplemented in $G$, and obtain several new characterizations of   supersolvability of $\mathcal{F}_S (G)$.
\end{abstract}
\section{Introduction}
All groups considered in this paper will be finite. Let $G$ be a finite group and $H$ be a subgroup of $G$. Recall that $H$ is said to be  complemented in $G$, if there exists a subgroup $K$ such that $G =HK$, and $H \cap K =1$. In {{\cite{WY}}}, Wang introduced the following concept, which is regarded as one of the originations of generalised complementarity: $H$ is said to be $c$-supplemented ($c$-normal) in $G$, if there exists a subgroup $K$ of $G$ such that $G = HK$ and $H \cap K \leq H_G$. Clearly, if $H$ is complemented in $G$, then $H$ is $c$-supplemented in $G$. In {{\cite{GL}}}, Guo and Lu introduced the following concept: $H$ is called a $SS$-supplemented subgroup of $G$, if there exists a subgroup $K$ of $G$ such that $G = HK$ and $H \cap K$ is $S$-permutable in $K$, where the definition of $S$-permutable is as follows:
\begin{definition}
Let $G$ be a finite group and $A$ be a subgroup of $G$. We say $A$ is $S$-permutable ($S$-quasinormal) in $G$, if $AQ=QA$ for all Sylow subgroups $Q$ of $G$.
\end{definition}
With the definition above, for a fixed subgroup $H$ of $G$, we denote $H_{sG}$ the subgroup of $H$ generated by all subgroups of $H$ which are $S$-permutable in $G$. In {{\cite{AS}}}, Skiba introduced the following concept: A subgroup $H$ of $G$ is said to be weakly $s$-supplemented in $G$, if there exists a subgroup $T \leq G$ such that $G = HT$ and $H \cap T \leq H_{sG}$. Then a way of generalization comes into our mind, that is to change the restriction of $H \cap T$. For example, in {{\cite{WU}}}, Z. Wu {\it et al.} introduced the following definition: 
\begin{definition}
Let $G$ be a finite group and $A$ be a subgroup of $G$. Then $A$ is said to be a $S\Phi$-supplemented subgroup of $G$, if there exists a subgroup $T$ of $G$ such that $G =AT$ and $A \cap T \leq \Phi (A)$.
\end{definition}
Now, we want to generalise the concept of $S\Phi$-supplementarity. Combining with one way of generalization introduced above, we may change the restriction of $H \cap T$ from $\Phi (H)$ to $\Phi (H) H_{sG}$. That is exactly the concept introduced by {{\cite{MA}}}:
\begin{definition}
Let $G$ be a finite group and $A$ be a subgroup of $G$. Then $A$ is called a weakly $S\Phi$-supplemented subgroup of $G$, if there exists a subgroup $T$ of $G$ such that $G =AT$ and $A \cap T \leq \Phi (A) A_{sG}$. 
\end{definition}
Today, there are a lot of generalised supplementarities, which are widely researched by many authors. A natural question comes into our mind, that is to compare these supplementarities. For example, if $H$ is complemented in $G$, then $H$ satisfies almost all supplementarities like $SS$-supplemented, weakly $S \Phi$-supplemented and so on. If we denote this relation by $>$, i.e. complemented $>$ weakly $S \Phi$-supplemented, then we have the following conclusion:
\begin{align*}
&complemented > weakly ~\Phi\text{-}supplemented > weakly ~c\Phi\text{-}supplemented>weakly~ S\Phi\text{-}supplemented.\\
&complemented > c\text{-}supplemented>weakly ~c\Phi\text{-}supplemented>weakly~ S\Phi\text{-}supplemented.
\end{align*}
Where weakly $c\Phi$-supplemented property can be defined by changing the restriction of $H \cap T$ into $\Phi (H)H_G$. In {{\cite{WU}}}, Z. Wu {\it et al.} obtained the characterizations of $p$-nilpotency under the assumption that certain subgroups are weakly $\Phi$-supplemented:
\begin{theorem}[{{\cite[Theorem 3.1]{WU}}}]\label{1.4}
Let $N \unlhd G$ such that $G/N$ is $p$-nilpotent, where $p$ is the smallest prime divisor of $|G|$. Suppose that every cyclic subgroup of $N$ of order $4$ is weakly $S\Phi$-supplemented in $G$ and every
minimal subgroup of $N$ of order $p$ lies in $Z_{\mathfrak{N}_p} (G)$. Then $G$ is $p$-nilpotent.
\end{theorem}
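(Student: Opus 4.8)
The plan is to argue by contradiction: suppose the theorem fails and let $(G,N)$ be a counterexample with $|G|$ minimal. Since $G\cong G/N$ when $N=1$, we have $N\ne 1$. A cyclic subgroup of order $4$ inside $N$ forces $2\mid|G|$, and as $p$ is the smallest prime divisor of $|G|$ this makes the clause on cyclic subgroups of order $4$ vacuous unless $p=2$; when $p$ is odd, $|G|$ is odd, hence $G$ is soluble by Feit--Thompson, and one only needs the classical assertion that $N\unlhd G$ with $G/N$ $p$-nilpotent and every minimal subgroup of $N$ of order $p$ being $\mathfrak N_p$-central forces $G$ to be $p$-nilpotent (for $p$ odd no hypothesis on subgroups of order $p^2$ is needed), which I would obtain along the lines below. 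So assume $p=2$ henceforth.

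Before the main argument I would record the inheritance properties of the hypotheses, using the elementary lemmas on weakly $S\Phi$-supplemented subgroups, the facts that $S$-permutability passes to quotients and restricts to normal subgroups, and the facts $Z_{\mathfrak N_p}(G)\cap K\le Z_{\mathfrak N_p}(K)$ and $Z_{\mathfrak N_p}(G)K/K\le Z_{\mathfrak N_p}(G/K)$ for $K\unlhd G$: (i) a Sylow $2$-subgroup of $N$ maps isomorphically onto one of $NO_{p'}(G)/O_{p'}(G)$, so $(G/O_{p'}(G),\,NO_{p'}(G)/O_{p'}(G))$ again satisfies the hypotheses, whence minimality together with the fact that $p$-nilpotency is unaffected by factoring out $O_{p'}(G)$ gives $O_{p'}(G)=1$; (ii) $G^{\mathfrak N_p}\le N$ since $G/N$ is $p$-nilpotent, and the hypotheses for $G^{\mathfrak N_p}$ in place of $N$ follow from those for $N$, so we may take $N=G^{\mathfrak N_p}$; (iii) if $N<G$, then $(N,N)$ satisfies the hypotheses of the theorem with $N$ as the ambient group, so by minimality $N$ is $p$-nilpotent, and then $O_{p'}(N)\operatorname{char}N$ forces $O_{p'}(N)=1$, i.e. $N$ is a $2$-group with $G^{\mathfrak N_p}\le N\le O_2(G)$ and $G$ $2$-soluble. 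Thus I am reduced to two configurations: (A) $N$ is a nontrivial $2$-group lying in $O_2(G)$, with $G$ $2$-soluble; (B) $N=G=G^{\mathfrak N_p}$.

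In configuration (A) I aim to show $N\le Z_{\mathfrak N_p}(G)$, which together with $G/N\in\mathfrak N_p$ and the saturatedness of $\mathfrak N_p$ forces the contradiction $G\in\mathfrak N_p$. Since $G$ is $2$-soluble with $O_{2'}(G)=1$ we have $C_G(O_2(G))\le O_2(G)$. Induct on $|N|$: choose a minimal normal subgroup $L$ of $G$ with $L\le N$, necessarily elementary abelian of $p$-power order; as the subgroups of $L$ of order $p$ are minimal subgroups of $N$ we get $L\le Z_{\mathfrak N_p}(G)$, hence $O^p(G)\le C_G(L)$. Passing to $\bar G=G/L$ and $\bar N=N/L$, the inductive hypothesis will give $\bar N\le Z_{\mathfrak N_p}(\bar G)$ and hence $N\le Z_{\mathfrak N_p}(G)$, provided $(\bar G,\bar N)$ still satisfies the hypotheses; the delicate point is the clause on cyclic subgroups of order $4$, since a cyclic subgroup of $\bar N$ of order $4$ need not lift to one of $N$ of order $4$ (it may only survive inside a section of order $8$), and this is exactly where the order-$4$ hypothesis is spent, via the structure of a weakly $S\Phi$-supplemented cyclic subgroup $X$ of order $4$: either $X$ is $S$-permutable in $G$, so $X\le O_2(G)$ and $X\le Z_{\mathfrak N_p}(G)$ by the $L$-argument applied to the order-$p$ subgroup of $X$, or $X$ has a supplement of index $2$ or $4$ in $G$; in either case the clause can be transported to $\bar G$. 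In configuration (B) we have $O^2(G)=G$ and $O_{2'}(G)=1$; since $G$ has an involution, $Z_{\mathfrak N_p}(G)\ne1$, and a minimal normal subgroup of $G$ inside $Z_{\mathfrak N_p}(G)$ must be an elementary abelian $2$-group centralised by $O^2(G)=G$, so $Z(G)$ contains an involution $z$. I would then verify that $(G/\langle z\rangle,\,G/\langle z\rangle)$ satisfies the hypotheses --- a cyclic subgroup of order $4$ of $G$ containing $z$, if $S$-permutable in $G$, is normal and hence central since $O^2(G)=G$ and $\operatorname{Aut}(\mathbb Z/4)$ is a $2$-group, while an index-$2$ or index-$4$ supplement is incompatible with $O^2(G)=G$ --- so by minimality $G/\langle z\rangle$ is $p$-nilpotent, and since $z$ is central this pulls back to $G$ being $p$-nilpotent, the final contradiction.

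The main obstacle throughout is transporting the clause on cyclic subgroups of order $4$ faithfully across the quotients $G/L$ and $G/\langle z\rangle$: a subgroup of order $4$ may collapse to order $2$ or merge into an order-$8$ section, so one cannot simply pull back a cyclic subgroup of order $4$, and the bulk of the technical work is the case analysis on small $2$-groups and their $S$-permutable sections needed to show that the hypothesis nevertheless survives. A secondary difficulty is extracting enough structural information from a supplementation hypothesis --- $X\cap T\le\Phi(X)X_{sG}$ together with a bound on $|G:T|$ --- rather than from a normality hypothesis, which I would handle by playing $T$ off against $O^2(G)$ to force either $X$ normal in $G$ or $T=G$.
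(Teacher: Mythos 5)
A preliminary remark: Theorem \ref{1.4} is quoted from \cite{WU} and this paper gives no proof of it; the nearest in-house argument is the proof of Theorem \ref{3.1}, where $Z_{\mathfrak{N}_p}(G)$ is strengthened to $Z(G)$. That proof never passes to quotients: the hypotheses restrict to every proper subgroup $L$ (via $Z(G)\cap L\le Z(L)$ and Lemma \ref{1}(1)), so a minimal counterexample is a minimal non-$p$-nilpotent group $P\rtimes Q$ with $P/\Phi(P)$ a chief factor, and a single cyclic $\langle x_0\rangle\le P$ of order $4$ that is not $S$-permutable forces $P=\langle x_0\rangle\Phi(P)=\langle x_0\rangle$, whence $G$ is $2$-nilpotent. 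Your plan is structurally different: you keep the weaker $Z_{\mathfrak{N}_p}$-hypothesis and push the configuration through the quotients $G/O_{p'}(G)$, $G/L$ and $G/\langle z\rangle$.

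That is where the genuine gap lies. You correctly observe that the clause ``every cyclic subgroup of $N$ of order $4$ is weakly $S\Phi$-supplemented in $G$'' does not obviously survive passage to $G/L$ or $G/\langle z\rangle$ (Lemma \ref{1}(2) needs $L\le X$ and Lemma \ref{1}(3) needs $(|L|,|X|)=1$; neither holds for $|L|=2$, $|X|=4$), but you then defer exactly this verification as ``the bulk of the technical work'' without carrying it out, so the induction does not close as written. Moreover, the one concrete step you offer in the $S$-permutable branch is false as stated: knowing that $\langle x^2\rangle$ lies in $Z_{\mathfrak{N}_p}(G)$ does not place $\langle x\rangle$ inside $Z_{\mathfrak{N}_p}(G)$, since the upper factor $\langle x\rangle/\langle x^2\rangle$ need not be covered by the $\mathfrak{N}_p$-hypercentre --- controlling precisely that factor is what the order-$4$ hypothesis is for. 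Two smaller defects: the odd-$p$ case is not ``along the lines below'' (everything that follows is specific to $p=2$) and needs its own proof or citation; and ``an index-$4$ supplement is incompatible with $O^2(G)=G$'' is not true by itself (the coset action may land in $A_4$) --- one must add that the complement $X\cong C_4$ acts regularly on the four cosets, so $G/T_G$ contains a $4$-cycle and hence has a subgroup of index $2$, which is the actual contradiction. A cleaner route, consistent with how the paper argues Theorem \ref{3.1}, is to avoid quotients altogether: since the $Z_{\mathfrak{N}_p}$-hypothesis is inherited by subgroups, reduce to a minimal non-$p$-nilpotent subgroup $A=A_p\rtimes A_q$ and run the $\langle x_0\rangle$-versus-$\Phi(A_p)$ computation there.
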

\begin{theorem}[{{\cite[Theorem 3.4]{WU}}}]\label{1.5}
Let $N$ be a normal subgroup of $G$ such that $G/N$ is $p$-nilpotent, where $p$ is the smallest prime divisor of $|G|$. Suppose that every cyclic subgroup of $N$ with order $p$ or $4$ is weakly $S\Phi$-supplemented in $G$. Then $G$ is $p$-nilpotent.
\end{theorem}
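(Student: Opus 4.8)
The plan is to argue by minimal counterexample and to reduce matters to Theorem~\ref{1.4}. Suppose the theorem is false, let $G$ be a counterexample of least order, and note we may assume $p \mid |G|$ (otherwise $G$ is a $p'$-group and $p$-nilpotency is automatic). The first step would be the standard reduction $O_{p'}(G) = 1$: putting $\bar{G} = G/O_{p'}(G)$ and $\bar{N} = NO_{p'}(G)/O_{p'}(G)$, the quotient $\bar{G}/\bar{N}$ is $p$-nilpotent and $p$ remains the smallest prime dividing $|\bar{G}|$; and since the property of being weakly $S\Phi$-supplemented passes to quotients, and $O_{p'}(G)$ being a $p'$-group forces every cyclic subgroup of $\bar{N}$ of order $p$ or $4$ to be the image of a cyclic subgroup of the same order contained in a Sylow $p$-subgroup of $N$, one checks that $\bar{G}$ again satisfies the hypotheses. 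By minimality $\bar{G}$ is $p$-nilpotent, which (as $O_{p'}(G)$ is a $p'$-group) makes $G$ $p$-nilpotent, a contradiction; hence $O_{p'}(G) = 1$.

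Next, the condition on cyclic subgroups of order $4$ in the hypothesis of Theorem~\ref{1.4} holds by assumption, so if every subgroup of $N$ of order $p$ were contained in $Z_{\mathfrak{N}_p}(G)$ then Theorem~\ref{1.4} would force $G$ to be $p$-nilpotent. As $G$ is a counterexample, fix $L \le N$ with $|L| = p$ and $L$ not contained in $Z_{\mathfrak{N}_p}(G)$. By hypothesis there is $T \le G$ with $G = LT$ and $L \cap T \le \Phi(L)L_{sG}$; here $\Phi(L) = 1$, and $L_{sG}$, being generated by the subgroups of the prime-order group $L$ that are $S$-permutable in $G$, is either $1$ or $L$. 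If $L_{sG} = L$, then $L$ is $S$-permutable in $G$, and the standard fact that an $S$-permutable subgroup of prime order lies in $Z_{\mathfrak{N}_p}(G)$ contradicts the choice of $L$. Hence $L_{sG} = 1$, so $L \cap T = 1$; thus $T$ is a complement of $L$ in $G$, of index $p$.

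Because $p$ is the smallest prime dividing $|G|$, the index-$p$ subgroup $T$ is normal in $G$, and I would then apply minimality to $T$ with the normal subgroup $N \cap T$: the quotient $T/(N \cap T) \cong TN/N$ is a subgroup of the $p$-nilpotent group $G/N$, hence $p$-nilpotent, while every cyclic subgroup of $N \cap T$ of order $p$ or $4$ should be weakly $S\Phi$-supplemented in $T$. This last point is the main obstacle: one must show that being weakly $S\Phi$-supplemented in $G$ descends to the normal subgroup $T$, which reduces to verifying $H_{sG} \le H_{sT}$ for the relevant $H \le T$, and this rests on the fact that the Sylow subgroups of $T$ are exactly the intersections with $T$ of the Sylow subgroups of $G$. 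Granting it, $T$ satisfies the hypotheses of the theorem, so $T$ is $p$-nilpotent by minimality; then $O_{p'}(T)$ is characteristic in $T$ and hence normal in $G$, so $O_{p'}(T) \le O_{p'}(G) = 1$, and a $p$-nilpotent group with trivial $p'$-core is a $p$-group. Therefore $T$ is a $p$-group, $G = LT$ is a $p$-group, and $G$ is $p$-nilpotent --- the desired contradiction. Apart from this descent, the only non-elementary ingredients are the parallel descent to $G/O_{p'}(G)$ and the Schmid-type property of $S$-permutable subgroups of prime order; the remainder is routine bookkeeping.
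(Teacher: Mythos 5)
This statement is quoted in the paper as a known result of Wu, Mao and Guo (\cite[Theorem 3.4]{WU}); the paper gives no proof of it, so there is no internal argument to compare yours against. Judged on its own, your deduction of Theorem~\ref{1.5} from Theorem~\ref{1.4} is essentially correct: the choice of a minimal subgroup $L\le N$ outside $Z_{\mathfrak{N}_p}(G)$, the dichotomy $L_{sG}=L$ or $L_{sG}=1$, the normality of the index-$p$ complement $T$, and the descent of minimality to $T$ all go through. Two remarks. First, the step you flag as ``the main obstacle'' --- that weak $S\Phi$-supplementation in $G$ passes to subgroups of $N\cap T$ viewed inside $T$ --- is not an obstacle at all: it is exactly Lemma~\ref{1}(1) of this paper (valid for any intermediate subgroup $H\le K\le G$, normality of $T$ is irrelevant), resting on Kegel's fact that $S$-permutability in $G$ implies $S$-permutability in any overgroup of $H$ inside $G$. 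Second, your ``standard fact'' that an $S$-permutable subgroup of prime order lies in $Z_{\mathfrak{N}_p}(G)$ is false as stated unconditionally (take $L=A_3$ in $S_3$ with $p=3$); it does hold here, but only because $p$ is the smallest prime divisor of $|G|$, via Schmid's theorem $O^p(G)\le N_G(L)$ together with $N_G(L)=C_G(L)$ (since $\gcd(|G|,p-1)=1$), which gives $L^G\le Z_{\mathfrak{N}_p}(G)$; you should make that dependence explicit. Finally, the reduction to $O_{p'}(G)=1$ can be dispensed with: once $T$ is $p$-nilpotent, its normal $p$-complement is characteristic in $T\unlhd G$, hence normal in $G$, and has $p$-power index $p\cdot|T|_p$ in $G$, so it is already a normal $p$-complement for $G$.
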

In the first part of this paper, we investigate the influence of weakly $S\Phi$-supplemented subgroups on the structure of finite groups, and obtain the following results compared to Theorem \ref{1.4} and \ref{1.5}. 
\begin{theorem}\label{3.1}
Let $G$ be a group with $N \unlhd G$ such that $G/N$ is $p$-nilpotent. Suppose that every minimal subgroup of $N$ of order $p$ is contained in $Z(G)$, and every cyclic subgroup of $N$ of order $4$(if $p=2$) is weakly $S\Phi$-supplemented in $G$. Then $G$ is $p$-nilpotent.
\end{theorem}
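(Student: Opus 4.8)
The plan is to induct on $|G|$ and work with a minimal counterexample; the single most useful observation is that the prime $p$ may be assumed odd. Indeed, if $p$ is the smallest prime dividing $|G|$, then every subgroup $L$ of $N$ of order $p$, lying in $Z(G)$, is normal in $G$ and hence $S$-permutable in $G$, so $L=L_{sG}$ and (taking $T=G$) $L$ is weakly $S\Phi$-supplemented in $G$; together with the order-$4$ hypothesis (vacuous unless $p=2$, since then $|G|$ is odd) this is exactly what is needed for Theorem~\ref{1.5}, so $G$ would be $p$-nilpotent -- a contradiction. Hence $p$ is not the least prime divisor of $|G|$; in particular $p$ is odd, the order-$4$ clause is now vacuous, and what must be shown is: if $N\unlhd G$ with $G/N$ $p$-nilpotent and every subgroup of $N$ of order $p$ lies in $Z(G)$, then $G$ is $p$-nilpotent. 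From here weak $S\Phi$-supplementation plays no further role.

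Next I would carry out the usual reductions. If $O_{p'}(G)\neq 1$, pass to $\overline G=G/O_{p'}(G)$ with $\overline N=NO_{p'}(G)/O_{p'}(G)$; a subgroup of $\overline N$ of order $p$ is the image of a $p$-subgroup of its preimage, which lies in $NO_{p'}(G)$ and hence in $N$ (because $NO_{p'}(G)/N$ is a $p'$-group), so it is the image of an order-$p$ subgroup of $N$ and therefore lies in $Z(\overline G)$; since $\overline G/\overline N$, a quotient of $G/N$, is $p$-nilpotent, minimality makes $\overline G$, and then $G$ (as $O_{p'}(G)$ is a normal $p'$-subgroup), $p$-nilpotent -- contradiction. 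So $O_{p'}(G)=1$, and likewise $O_{p'}(N)=1$. Now if $N<G$, the theorem applies by induction to the pair $(N,N)$ -- every order-$p$ subgroup of $N$ lies in $Z(G)\cap N\le Z(N)$, and there is nothing to check at the prime $4$ -- so $N$ is $p$-nilpotent, whence $O_{p'}(N)=1$ forces $N$ to be a $p$-group with $\Omega_1(N)\le Z(G)$. Since $p$ is odd, the classical fact that the subgroup of $\operatorname{Aut}(P)$ centralizing $\Omega_1(P)$ is a $p$-group (for $P$ a $p$-group, $p$ odd) shows $G/C_G(N)$ is a $p$-group, i.e. $N\le Z_\infty(G)$; as $\mathfrak N_p$ is a saturated formation and $G/N\in\mathfrak N_p$ with $N\le Z_\infty(G)\le Z_{\mathfrak N_p}(G)$, we get $G\in\mathfrak N_p$ -- contradiction. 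Therefore $N=G$.

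It remains to rule out the case $N=G$: every subgroup of $G$ of order $p$ lies in $Z(G)$, $p$ odd, and $G$ is not $p$-nilpotent. By Frobenius's normal $p$-complement theorem there is a nontrivial $p$-subgroup $R\le G$ with $N_G(R)/C_G(R)$ not a $p$-group. But $\Omega_1(R)$ is generated by order-$p$ subgroups of $G$, hence $\Omega_1(R)\le Z(G)$, so $N_G(R)$ centralizes $\Omega_1(R)$ and $N_G(R)/C_G(R)$ embeds into the subgroup of $\operatorname{Aut}(R)$ centralizing $\Omega_1(R)$ -- which, $p$ being odd, is a $p$-group. This contradiction finishes the argument.

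The proof is thus an assembly of standard pieces rather than one hard estimate, and the place to be careful is exactly where $p=2$ must be excluded. The load-bearing input is the classical $p$-odd statement that a $p'$-group of automorphisms of a $p$-group centralizing $\Omega_1$ is trivial; this fails for $p=2$ (e.g. the order-$3$ automorphisms of $Q_8$ fix $\Omega_1(Q_8)=Z(Q_8)$), which is precisely why the order-$4$ hypothesis is imposed only for $p=2$ and why the $p=2$ case is routed through Theorem~\ref{1.5} -- legitimate because $2$ is then the smallest prime and central subgroups are automatically weakly $S\Phi$-supplemented. The remaining care is routine bookkeeping: checking that the hypotheses descend to $G/O_{p'}(G)$ and to $(N,N)$, and using the hypercentre of the saturated formation $\mathfrak N_p$ correctly.
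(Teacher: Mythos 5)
Your proof is correct, but it takes a genuinely different route from the paper's. The paper runs a minimal-counterexample argument entirely inside the Schmidt-group framework: using Lemma~\ref{1}(1) it shows every proper subgroup inherits the hypotheses, so $G$ is minimal non-nilpotent, $G=P\rtimes Q$ with $P/\Phi(P)$ a chief factor and $\exp(P)=p$ or $4$; centrality of the order-$p$ subgroups rules out $\exp(P)=p$, and the weak $S\Phi$-supplementation of order-$4$ subgroups is then exercised directly against $P/\Phi(P)$ to force $P$ cyclic and conclude. You instead split on whether $p$ is the smallest prime divisor of $|G|$: if it is, central order-$p$ subgroups are normal, hence $S$-permutable, hence weakly $S\Phi$-supplemented (with $T=G$), and the quoted Theorem~\ref{1.5} applies verbatim; if not, $p$ is odd, the order-$4$ clause is vacuous, and your standard reductions ($O_{p'}(G)=1$; when $N<G$, $N$ is a $p$-group with $\Omega_1(N)\le Z(G)$ and hence lies in $Z_\infty(G)\le Z_{\mathfrak{N}_p}(G)$; when $N=G$, Frobenius's criterion) reduce everything to the classical fact that a $p'$-automorphism of a $p$-group, $p$ odd, acting trivially on $\Omega_1$ is trivial. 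Your route has the virtue of isolating exactly where the supplementation hypothesis carries weight (only at the smallest prime, essentially only at $p=2$, as your $Q_8$ remark correctly explains) and exhibits the smallest-prime case of Theorem~\ref{3.1} as a formal consequence of Theorem~\ref{1.5}; the cost is dependence on that external result, whereas the paper's argument is self-contained and genuinely uses the weakly $S\Phi$-supplemented condition in its final step. The individual steps you invoke (descent of the hypotheses to $G/O_{p'}(G)$ and to the pair $(N,N)$, the hypercentre argument for the saturated formation $\mathfrak{N}_p$, and the Frobenius argument) all check out.
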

\begin{theorem}\label{3.2}
Let $G$ be a finite group with a normal subgroup $N$ such that $G/N$ is nilpotent. Suppose that every minimal subgroup of $F^{*} (N)$ is contained in $Z(G)$ and that every cyclic subgroup of order $4$ is weakly $S\Phi$-supplemented in $G$. Then $G$ is nilpotent.
\end{theorem}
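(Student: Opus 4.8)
I will argue by induction on $|G|$, and the whole strategy is to reduce the statement to the single assertion $F^{*}(N)\le Z_{\infty}(G)$. Suppose for a moment this is known. By the fundamental property of the generalized Fitting subgroup, $C_{N}(F^{*}(N))=Z(F^{*}(N))\le F^{*}(N)\le Z_{\infty}(G)$, and in particular $C_{N}(F^{*}(N))\le Z_{\infty}(G)\cap N\le Z_{\infty}(N)$. On the other hand, since $F^{*}(N)$ lies in the hypercentre of $G$, the chain $H_{i}=Z_{i}(G)\cap F^{*}(N)$ is a $G$-invariant series from $1$ to $F^{*}(N)$ with $[H_{i+1},G]\le H_{i}$, so $G$ (acting by conjugation) stabilises it; by the Kaloujnine--Hall theorem $G/C_{G}(F^{*}(N))$ is nilpotent, hence so is its subgroup $N/C_{N}(F^{*}(N))$. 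Then $N/Z_{\infty}(N)$ is an epimorphic image of $N/C_{N}(F^{*}(N))$, so it is nilpotent with trivial centre, i.e.\ trivial; thus $N$ is nilpotent and therefore $N=F(N)=F^{*}(N)\le Z_{\infty}(G)$. Finally $G/Z_{\infty}(G)$ is an epimorphic image of $G/N$, hence nilpotent with trivial centre, so $G=Z_{\infty}(G)$ is nilpotent. (Note that this detour is forced: one cannot quote Theorem \ref{3.1} verbatim, since its hypotheses constrain all minimal subgroups of $N$ while here only those inside $F^{*}(N)$ are controlled.)

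It remains to prove $F^{*}(N)\le Z_{\infty}(G)$. Write $F^{*}(N)=F(N)E(N)$. First I would kill the layer. If $L$ is a component of $N$ then every subgroup of $L$ of prime order is a minimal subgroup of $F^{*}(N)$, hence is contained in $Z(G)\cap L\le Z(L)$; but it is well known that a quasisimple group has a non-central element of prime order (lifting a prime-order element of the simple quotient $L/Z(L)$, or via a minimal non-nilpotent subgroup). This contradiction gives $E(N)=1$, so $F^{*}(N)=F(N)=\prod_{p}O_{p}(N)$, and it suffices to show $O_{p}(N)\le Z_{\infty}(G)$ for each prime $p$.

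For an odd prime $p$ this is routine: $\Omega_{1}(O_{p}(N))$ is generated by minimal subgroups of $F^{*}(N)$, so $\Omega_{1}(O_{p}(N))\le Z(G)\le Z_{\infty}(G)$, and by the classical result that a normal $p$-subgroup ($p$ odd) whose $\Omega_{1}$ lies in $Z_{\infty}(G)$ is itself contained in $Z_{\infty}(G)$, we get $O_{p}(N)\le Z_{\infty}(G)$. The prime $2$ is where the real work — and the hypothesis on weak $S\Phi$-supplementation — is needed. Here $\Omega_{1}(O_{2}(N))\le Z(G)$ as before, and every cyclic subgroup of $O_{2}(N)$ of order $4$ is weakly $S\Phi$-supplemented in $G$. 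Using the preliminary lemmas on weakly $S\Phi$-supplemented subgroups (their behaviour under passing to quotients; the facts that an $S$-permutable subgroup of $2$-power order is subnormal and is contained in the relevant $O_{2}$; and that, $2$ being the smallest prime, a supplement of index at most $2$ is normal), together with the inductive hypothesis applied to proper sections such as $G/O_{2'}(N)$, I would show that each such cyclic subgroup, and hence $\Omega_{2}(O_{2}(N))$, lies in $Z_{\infty}(G)$; then the $2$-analogue of the classical result ($\Omega_{2}(P)\le Z_{\infty}(G)$ for a normal $2$-group $P$ forces $P\le Z_{\infty}(G)$) yields $O_{2}(N)\le Z_{\infty}(G)$, completing the proof.

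The crux, and the expected main obstacle, is exactly this $p=2$ step. One must exclude configurations like a $Q_{8}$-section acted on non-trivially by an element of order $3$ — for such a group (e.g.\ $SL(2,3)$) $\Omega_{1}$-centrality alone does not force hypercentrality — and the weak $S\Phi$-supplementation of the order-$4$ cyclic subgroups is precisely what rules these out (indeed a direct check shows no $C_{4}\le Q_{8}$ is weakly $S\Phi$-supplemented in $SL(2,3)$). Converting this into the clean statement $\Omega_{2}(O_{2}(N))\le Z_{\infty}(G)$ requires careful tracking of the supplement $T$ and of the $S$-permutable core $\langle x\rangle_{sG}$ — notably the awkward possibility $\langle x\rangle_{sG}=\langle x\rangle$ — and a correctly set-up induction to push the argument through quotients. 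Everything else (the reduction to $F^{*}(N)\le Z_{\infty}(G)$, the triviality of $E(N)$, and the odd-prime case) is essentially standard.
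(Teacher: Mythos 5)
Your global strategy (reduce everything to $F^{*}(N)\le Z_{\infty}(G)$, then use $C_{N}(F^{*}(N))\le Z(F^{*}(N))$ plus the stability-group theorem to force $N$, and then $G$, to be nilpotent) is sound and genuinely different from the paper's minimal-counterexample argument; the elimination of $E(N)$ and the odd-prime case via the classical $\Omega_{1}$-results are also fine in outline. But the proof is not complete: the $p=2$ step, which you yourself flag as ``the crux'' and ``the expected main obstacle,'' is exactly the point where the order-$4$ hypothesis carries all the content of the theorem, and for it you offer only a list of tools you ``would'' use and a promise of ``careful tracking of the supplement $T$'' and ``a correctly set-up induction.'' Nothing is actually proved there: you do not show that a cyclic subgroup $\langle x\rangle$ of order $4$ in $O_{2}(N)$ lies in $Z_{\infty}(G)$, you do not handle the case $\langle x\rangle_{sG}=\langle x\rangle$, and you do not explain how the quotient induction interacts with the hypothesis (note also that Lemma \ref{1} gives inheritance to subgroups and to quotients by subgroups of coprime order or contained in $H$, which is not enough to pass the order-$4$ condition freely through arbitrary sections such as $G/O_{2'}(N)$). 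Since everything outside this step is routine, the gap is the theorem.

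It is worth seeing how the paper closes precisely this gap: in a minimal counterexample every proper normal subgroup is nilpotent, one reduces to $G=N=G^{\mathfrak N}$ with $F^{*}(G)<G$ nilpotent, takes the normal Sylow $p$-subgroup $P$ of $F^{*}(G)$ for the smallest prime $p$ dividing $|F^{*}(G)|$, and applies Theorem \ref{3.1} \emph{to the proper subgroups} $PQ$ for each Sylow $q$-subgroup $Q$ with $q\neq p$ (the hypotheses transfer by Lemma \ref{1}(1)); $p$-nilpotency of $PQ$ gives $[P,Q]=1$ for all such $Q$, hence $C_{G}(P)=G$, $P\le Z(G)$, and induction on $G/P$ (using $F^{*}(G/P)=F^{*}(G)/P$) finishes. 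Your parenthetical claim that Theorem \ref{3.1} ``cannot be quoted'' is true for $G$ itself but overlooks that it can be applied to subgroups such as $PQ$, which is exactly the device that lets the paper avoid redoing the order-$4$ analysis from scratch. Adopting that device would let you complete your $p=2$ step; as written, the proposal does not establish the theorem.
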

In the first part of this paper, we also obtain another characterization of $p$-nilpotency under the assumption that certain subgroups are weakly $\Phi$-supplemented in Theorem \ref{3.3}, and show the influence of weakly $\Phi$-supplementarity on the structure of the chief factors of $G$ in Theorem \ref{3.4}. 

Now, we are going to give a brief introduction to the basic theory of fusion systems, and begin to introduce our results on the characterization of the structure of $\mathcal{F}_S (G)$ under the assumption that certain subgroups of $S \in {\rm Syl}_p (G)$ are weakly $S \Phi$-supplemented in $G$. Let $S$ be a Sylow $p$-subgroup of $G$, where $p$ is a prime divisor of $|G|$. Then the fusion system of $G$ over $S$, named $\mathcal{F}_S (G)$, is a fusion category over $S$ which is defined as follows:
\begin{itemize}
\item[(1)] The object of $\mathcal{F}_S (G)$ is the set of all subgroups of $S$.
\item[(2)] For any $P,Q \leq S$, ${\rm Mor}_{\mathcal{F}_S (G)} (P,Q)=\lbrace \phi\,|\,\phi:P\rightarrow Q,\,p \mapsto p^g,\,P^g \leq Q,\,g \in G \rbrace  $.
\end{itemize}
One can easily find that $\mathcal{F}_S (G)$ is exactly a saturated fusion system over $S$ by {{\cite[Theorem 2.3]{AK}}}. As is known to all, the structure of $\mathcal{F}_S (G)$ has a strong relationship with the structure of $G$. Hence some structures of finite groups can be generalized into the fusion system $\mathcal{F}_S (G)$. Recall that $G$ is said to be supersolvable, if there exists a chief series, namely
$$1 = N_0 \leq N_1 \leq \cdots \leq N_t = G,$$
such that $N_{i+1}/N_i$ is cyclic, $i=0,1,\cdots,t-1$. As a natural way of generalization, we want to define a similar structure in $\mathcal{F}$, where $\mathcal{F}$ is a fusion system over a $p$-group $S$. Since the objects of $\mathcal{F}$ are exactly the subgroups of $S$, we may restrict the chief series of $G$ into a series of $S$. Note that the normality of subgroups $H$ of $S$ in $G$ represents the invariance of $H$ under the morphisms induced by conjugation of $G$, we may change the normality of $H$ in $G$ into invariance of $H$ under the morphisms in $\mathcal{F}$. Then one can easily find that the invariance of $H$ under the morphisms in $\mathcal{F}$ exactly suits the concept of weakly $\mathcal{F}$-closed property, hence we give the following definition which was introduced by N. Su in {{\cite{SN}}}.
\begin{definition}
Let $\mathcal{F}$ be a saturated fusion system over a $p$-group $S$. Then $\mathcal{F}$ is called supersolvable, if there exists a series of subgroups of $S$, namely:
$$1 = S_0 \leq S_1 \leq \cdots \leq S_n =S,$$
such that $S_i$ is strongly $\mathcal{F}$-closed, $i=0,1,\cdots, n$, and $S_{i+1}/S_i$ is cyclic for any $i=0,1,\cdots,n-1$.
\end{definition}

Now we would like to introduce some concepts which are useful for us to discover the structure of $\mathcal{F}_S (G)$. Let $S$ be a $p$-group and $P$ be a subgroup of $S$. Suppose that $\mathcal{F}$ is a fusion system over $S$. Then $P$ is called $\mathcal{F}$-centric, if $C_S (Q) = Z(Q)$ for all $Q \in P^{\mathcal{F}}$, where $P^{\mathcal{F}}$ denotes the set of all subgroups of $S$ which are $\mathcal{F}$-conjugate to $P$. $P$ is said to be fully normalized in $\mathcal{F}$, if $|N_S (P)| \geq |N_S(Q)|$ for all $Q \in P^{\mathcal{F}}$. $P$ is said to be $\mathcal{F}$-essential, if $P$ is $\mathcal{F}$-centric and fully normalized in $\mathcal{F}$, and ${\rm Out}_{\mathcal{F}} (P)$ contains a strongly $p$-embedded subgroup (see {{\cite[Definition A.6]{AK}}}). Now we are ready to introduce the following concept.
\begin{definition}
Let $p$ be a prime, $\mathcal{F}$ be a saturated fusion system on a finite group $S$. Let 
$$\mathcal{E}_{\mathcal{F}} ^{*}:=\lbrace Q \leq S\,|\,Q ~\text{is $\mathcal{F}$-essential, or $Q =S$}\rbrace. $$
\end{definition}
Let $G$ be  a finite group and $S$ be a Sylow $p$-subgroup of $G$. In many cases, if certain subgroups of $S$ satisfy certain properties, then $G$ is $p$-nilpotent or $p$-supersolvable, just like Theorem \ref{3.1} and \ref{3.2}. On the other hand, on the assumption that a smaller collection of  subgroups of $S$ satisfy some properties, for example, they are weakly $S\Phi$-supplemented in $G$, then $G$ is not necessarily a $p$-supersolvable subgroup. But actually, this does not mean that there is nothing to do with the structure of the finite group $G$. Notice that the fusion system $\mathcal{F}_S (G)$ is just a description of the structure of $G$, we wonder if we can characterize the structure of $\mathcal{F}_S (G)$. Fortunately, we obtain the following theorem to characterize the structure of $\mathcal{F}_S (G)$ under the assumption that certain subgroups of $S$ are weakly $S\Phi$-supplemented in $G$, which highlights the paper considerably.
\begin{theorem}\label{66666}
Let $G$ be a finite group and $S$ a Sylow $p$-subgroup of $G$, where $p$ is the smallest prime divisor of $|G|$. Suppose that ${\rm exp} (S) = p$, and every minimal subgroup of $S$  is weakly $S\Phi$-supplemented in $G$, then $\mathcal{F}_S (G)$ is supersolvable.
\end{theorem}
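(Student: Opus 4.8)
The plan is to prove the statement by induction on $|G|$, producing the required series for $\mathcal{F}_S(G)$ by repeatedly splitting off a strongly $\mathcal{F}_S(G)$-closed subgroup of order $p$ that turns out to be central in $G$. If $|S|\le p$ there is nothing to do ($S=1$) or the chain $1\le S$ already works ($|S|=p$), so assume $|S|>p$. Since $O_{p'}(G)\cap S=1$, the quotient map identifies $S$ with a Sylow $p$-subgroup $\bar S$ of $\bar G:=G/O_{p'}(G)$ and gives an isomorphism $\mathcal{F}_S(G)\cong\mathcal{F}_{\bar S}(\bar G)$; as $p$ remains the smallest prime divisor of $|\bar G|$, ${\rm exp}(\bar S)=p$, and weakly $S\Phi$-supplementation is inherited by quotients, we may assume $O_{p'}(G)=1$.

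I would first check that $O_p(G)\ne 1$. For $p$ odd this is automatic: then $|G|$ is odd and $G$ is solvable by the Feit--Thompson theorem. For $p=2$ it suffices to rule out components. If $Q$ is a component of $G$, then $S\cap Q$ is a Sylow $2$-subgroup of $Q$ contained in $S$, hence elementary abelian, and $Q$ contains an involution $t$. Since $Q=[Q,Q]$, if $\langle t\rangle$ were complemented in $G$ its complement would have index $2$, hence be normal and contain $[G,G]\supseteq Q\ni t$, contradicting triviality of the intersection; thus $\langle t\rangle$, being weakly $S\Phi$-supplemented in $G$ with $\Phi(\langle t\rangle)=1$, is $S$-permutable in $G$, hence subnormal in $G$, hence $S$-permutable and subnormal in $Q$, so $t\in Z(Q)$. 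Then every involution of $Q$ lies in $Z(Q)$, so $S\cap Q\le Z(Q)$ is a central Sylow $2$-subgroup of $Q$, forcing $Q$ to have a normal $2$-complement --- impossible for a quasisimple group. Hence $F^{*}(G)=F(G)=O_p(G)$, and $C_G(O_p(G))\le O_p(G)$ gives $O_p(G)\ne 1$. Now $O_p(G)$ contains a minimal normal subgroup $N$ of $G$ with $N\le O_p(G)\le S$; it is elementary abelian, and since $[N,O_p(G)]$ is a normal subgroup of $G$ properly contained in $N$ by nilpotence of $O_p(G)$, minimality gives $[N,O_p(G)]=1$, that is, $N\le Z(O_p(G))$.

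The heart of the proof is the claim that $|N|=p$. Every subgroup $H\le N$ of order $p$ is a minimal subgroup of $S$, hence weakly $S\Phi$-supplemented in $G$, and since $\Phi(H)=1$ it is either complemented in $G$ or $S$-permutable in $G$. Suppose some such $H$ is complemented, say $G=HT$ with $H\cap T=1$; then $|G:T|=p$, so $T\unlhd G$ because $p$ is the smallest prime divisor of $|G|$, hence $N\cap T$ is normal in $G$, and it is proper in $N$ since $H\not\le T$, so $N\cap T=1$ and Dedekind's law yields $N=N\cap HT=H(N\cap T)=H$, whence $|N|=p$. Otherwise every subgroup of $N$ of order $p$ is $S$-permutable in $G$ and is therefore normalized by $O^p(G)$; regarding $N$ as an $\mathbb{F}_p$-vector space, $O^p(G)$ stabilizes every line of $N$, so if $\dim N\ge 2$ its image in ${\rm GL}(N)$ consists of scalar transformations. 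As these scalars form a central $p'$-subgroup of ${\rm GL}(N)$ and $G/O^p(G)$ is a $p$-group, the image of $G$ in ${\rm GL}(N)$ is the direct product of a central $p'$-group and a $p$-group; the latter fixes a complete flag of $N$ and the former preserves every subspace, so the image of $G$ stabilizes a line of $N$, which is then a normal subgroup of $G$ strictly between $1$ and $N$, contrary to minimality. Hence $|N|=p$.

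Finally, a normal subgroup of order $p$ is central, since $p$ is the smallest prime divisor of $|G|$; thus $N\le Z(G)\cap S$, $|N|=p$, and $N$ is strongly $\mathcal{F}_S(G)$-closed. The group $G/N$ again satisfies all the hypotheses: $p$ is still its smallest prime divisor, ${\rm exp}(S/N)=p$, and every minimal subgroup of $S/N$ is the image of a minimal subgroup of $S$ (lift a generator; it has order $p$ as ${\rm exp}(S)=p$), hence is weakly $S\Phi$-supplemented in $G/N$ by the inheritance of this property under quotients. By the inductive hypothesis $\mathcal{F}_{S/N}(G/N)$ is supersolvable; pulling its series of strongly closed subgroups back through $G\to G/N$ (preimages of strongly closed subgroups are strongly $\mathcal{F}_S(G)$-closed, with the same successive quotients) and prepending $1\le N$ produces the required series, so $\mathcal{F}_S(G)$ is supersolvable. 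I expect the main obstacles to be the reduction guaranteeing $O_p(G)\ne 1$ in characteristic $2$, and the descent of the hypotheses to $G/N$ --- concretely, that a minimal subgroup of $S/N$, whose preimage in $S$ has order $p^2$, is weakly $S\Phi$-supplemented in $G/N$ --- which follows from the elementary lemmas on weakly $S\Phi$-supplemented subgroups but must be set up carefully.
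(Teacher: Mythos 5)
Your argument takes a genuinely different route from the paper's. The paper works locally in the fusion system: it shows every $Q\in\mathcal{E}^{*}_{\mathcal{F}}$ has order at least $p^2$, deduces $|O_p(G)|\ge p^2$, proves $O_p(G)\le Z_{\mathfrak{U}}(G)$ via Theorem \ref{3.4}, and then closes with the Aseeri--Kaspczyk criterion (Lemma \ref{222222}). You instead run a global induction on $|G|$: reduce modulo $O_{p'}(G)$, kill components to force $O_p(G)\ne 1$, and split off a central normal subgroup $N$ of order $p$. Most of this is sound and more self-contained than the paper's proof (no appeal to Lemma \ref{222222}), at the price of invoking Feit--Thompson and the standard facts that $S$-permutable subgroups are subnormal (Kegel) and that $S$-permutable $p$-subgroups are normalized by $O^p(G)$ (Schmid) --- the latter two are correct but are not among the paper's quoted lemmas and would need citations. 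The identification $\mathcal{F}_S(G)\cong\mathcal{F}_{\bar S}(\bar G)$ for $\bar G=G/O_{p'}(G)$, the involution analysis ruling out components when $p=2$, the dichotomy ``complemented or $S$-permutable'' for minimal subgroups, and the scalar-action argument giving $|N|=p$ are all fine.

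The genuine gap is the inductive descent to $G/N$. You need every minimal subgroup $\langle x\rangle N/N$ of $S/N$ (with $o(x)=p$, $x\notin N$) to be weakly $S\Phi$-supplemented in $G/N$, and you attribute this to ``inheritance under quotients.'' But Lemma \ref{1} only passes the property to quotients when $N\le H$ (part (2)) or $(|N|,|H|)=1$ (part (3)); here $|N|=|\langle x\rangle|=p$ and $N\ne\langle x\rangle$, so neither applies, and the assertion as written is a non sequitur. The claim is salvageable, but only by a case analysis you do not supply: if $\langle x\rangle$ is $S$-permutable in $G$, then $\langle x\rangle N/N$ is $S$-permutable in $G/N$ and one may take the supplement to be $G/N$ itself; if $\langle x\rangle$ is complemented by a (necessarily normal, index $p$) subgroup $T$ with $N\le T$, then $T/N$ is a complement of $\langle x\rangle N/N$; but if $N\not\le T$, then $G=N\times T$ and $TN/N=G/N$ is useless as a supplement, since it meets $\langle x\rangle N/N$ in the whole of $\langle x\rangle N/N$. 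In that last case one must change generators: write $x=n^{i}t$ with $n\in N$ and $t\in S\cap T$ of order $p$, note $\langle x\rangle N=\langle t\rangle N$, and apply Lemma \ref{1}(1) to see that $\langle t\rangle$ is weakly $S\Phi$-supplemented in $T\cong G/N$, which transports to $\langle x\rangle N/N$ in $G/N$. Without this step the induction does not close, so you should add it explicitly.
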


 \section{Preliminaries}
We will show in this section that the inheritance property of weakly $S\Phi$-supplemented subgroups is similar to those of other supplemented subgroups like $S\Phi$-supplemented subgroups in {{\cite[Lemma 2.1]{LZ}}} and weakly s-supplemented subgroups in {{\cite[Lemma 2.5]{MA2}}}.
\begin{lemma}[{{\cite[Lemma 2.5]{MA}}}]\label{1}
Let $G$ be a finite group, $H \leq K \leq G$, and $N \unlhd G$. Then the following hold:
\begin{itemize}
\item[(1)] If $H$ is a weakly $S\Phi$-supplemented subgroup of $G$, then $H$ is a weakly $S\Phi$-supplemented subgroup of $K$.
\item[(2)] If $N \leq H$ and $H$ is a weakly $S\Phi$-supplemented subgroup of $G$, then $H/N$ is a weakly $S\Phi$-supplemented subgroup of $G/N$.
\item[(3)] If $(|N|,|H|)=1$, and $H$ is a weakly $S\Phi$-supplemented subgroup of $G$, then $NH/N$ is a weakly $S\Phi$-supplemented subgroup of $G/N$.
\end{itemize}
\end{lemma}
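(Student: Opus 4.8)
The plan is to treat the three statements separately, in each case manufacturing an explicit supplement out of the given pair $(T,G=HT)$ with $H\cap T\le\Phi(H)H_{sG}$, and then controlling the new intersection by Dedekind's modular law together with the behaviour of $\Phi$ and of $S$-permutability under passage to subgroups and to quotients. I would first isolate two auxiliary facts. Fact (i): if $L\le K\le G$ and $L$ is $S$-permutable in $G$, then $L$ is $S$-permutable in $K$; to see this take any Sylow subgroup $Q_K=Q\cap K$ of $K$ (with $Q$ Sylow in $G$) and compute $LQ_K=L(Q\cap K)=LQ\cap K=QL\cap K=(Q\cap K)L=Q_KL$, using $LQ=QL$ and Dedekind's law (valid since $L\le K$). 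Fact (ii): if $L$ is $S$-permutable in $G$ and $N\unlhd G$, then $LN/N$ is $S$-permutable in $G/N$, because every Sylow subgroup of $G/N$ is the image of a Sylow subgroup of $G$. From (i) one gets $H_{sG}\le H_{sK}$, and from (ii) one gets $H_{sG}N/N\le (H/N)_{s(G/N)}$ and $NH_{sG}/N\le (NH/N)_{s(G/N)}$.

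For (1) I would set $T_1=T\cap K$. Dedekind gives $K=K\cap HT=H(T\cap K)=HT_1$, while $H\cap T_1=H\cap T\le\Phi(H)H_{sG}\le\Phi(H)H_{sK}$ by Fact (i); hence $T_1$ witnesses that $H$ is weakly $S\Phi$-supplemented in $K$.

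For (2), with $N\unlhd G$ and $N\le H$, I would take $\overline T=TN/N$ in $\overline G=G/N$. Then $\overline H\,\overline T=G/N$, and since $N\le H$ Dedekind yields $\overline H\cap\overline T=(H\cap TN)/N=N(H\cap T)/N$. Using the standard inclusion $\Phi(H)N/N\le\Phi(H/N)$ and Fact (ii), this lies in $(\Phi(H)N/N)(H_{sG}N/N)\le\Phi(H/N)(H/N)_{s(G/N)}$, as required.

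For (3) the coprimality is essential, and this is the step I expect to need the most care. I would take $\overline T=NT/N$; then $\overline H\,\overline T=G/N$ and $\overline H\cap\overline T=N(H\cap NT)/N$. The crux is to show $H\cap NT=H\cap T$, which I would obtain by a pure order count rather than by a direct set manipulation. Writing $D=H\cap NT$ and using $H\cap N=1$, one has $|\overline H|=|H|$ and $|\overline H\cap\overline T|=|D|$; comparing $|G/N|=|\overline H|\,|\overline T|/|\overline H\cap\overline T|$ with $|G/N|=|G|/|N|=|H|\,|T|/(|H\cap T|\,|N|)$ gives the identity $|D:H\cap T|=|N:N\cap T|$. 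The left side divides $|H|$ and the right side divides $|N|$, so $(|N|,|H|)=1$ forces both to equal $1$, whence $H\cap NT=H\cap T$. With this in hand, and using $\Phi(NH/N)=N\Phi(H)/N$ (again from $H\cap N=1$) together with Fact (ii), I conclude $\overline H\cap\overline T=N(H\cap T)/N\le\Phi(NH/N)\,(NH/N)_{s(G/N)}$, so $NH/N$ is weakly $S\Phi$-supplemented in $G/N$. The only delicate point throughout is this coprime order-count for (3); the rest is routine bookkeeping with Dedekind's law and the two inheritance facts for $S$-permutability.
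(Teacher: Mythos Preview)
Your argument is correct. The paper itself does not prove this lemma: it is quoted verbatim from \cite[Lemma~2.5]{MA} and used as a black box, so there is no ``paper's own proof'' to compare against. What you have supplied is essentially the standard verification one finds in the literature on inheritance of such supplement conditions, and all three parts go through as written. The two auxiliary facts on $S$-permutability are standard and your Dedekind justification for Fact~(i) is sound once one notes that every Sylow subgroup $Q_K$ of $K$ extends to a Sylow subgroup $Q$ of $G$ with $Q_K=Q\cap K$.

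One small simplification in part~(3): rather than the full order count, you can observe directly that $|G:T|=|H:H\cap T|$ divides $|H|$, while $|NT:T|=|N:N\cap T|$ divides both $|N|$ and $|G:T|$; coprimality then forces $N\le T$, so $H\cap NT=H\cap T$ immediately. Your computation reaches the same conclusion and is equally valid.
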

We say a group $H$ is quasisimple if $H' =H$, and either $N \leq Z(H)$ or $H =N$ for any $N \unlhd H$. Let $G$ be a finite group and $H \leq G$. It is well known that a quasisimple group $H$ is a component of $G$, if $H$ is subnormal in $G$. Then we denote the subgroup of $G$ generalised by all components of $G$ by $E(G)$. It is easy to find that $E(G)~{\rm{char}}~G$, and $[E(G),F(G)]=1$. Then the generalised Fitting subgroup of $G$ is defined by $F^{*} (G):=E(G)F(G)$. Now we introduce some basic properties about $F^{*} (G)$, which will be widely used in our proofs.
\begin{lemma}[{{\cite[Chapter X]{HU3}}} and {{\cite[Lemma 4]{BA}}}]\label{2}
Let $G$ be a group.
\begin{itemize}
\item[(1)] If $N$ is a normal subgroup of $G$, then $F^{*} (N) = N \cap F^{*} (G)$.
\item[(2)] If $F^{*} (G)$ is soluble, then $F^{*} (G) = F(G)$.
\item[(3)] $F(G) \leq F^{*} (G) = F^{*} (F^{*} (G))$.
\item[(4)] Let $p$ be a prime and $P$ be a normal subgroup of $G$. Suppose that $P \leq Z(G)$, then $F^{*} (G/P) = F^{*} (G)/P$.
\end{itemize}
\end{lemma}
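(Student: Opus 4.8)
The plan is to deduce all four statements from the two basic pillars of the theory of the generalized Fitting subgroup: the decomposition $F^{*}(G)=E(G)F(G)$ with $[E(G),F(G)]=1$, together with the fundamental self-centralizing property $C_G(F^{*}(G))=Z(F^{*}(G))$ and the characterization that a group $X$ is quasinilpotent if and only if $F^{*}(X)=X$, equivalently every element of $X$ induces an inner automorphism on each chief factor of $X$. Parts (2) and (3) are then the quickest. For (3), the inclusion $F(G)\le F^{*}(G)$ is immediate from the definition, and since $F^{*}(G)$ is itself quasinilpotent, applying the characterization above to $X=F^{*}(G)$ gives $F^{*}(F^{*}(G))=F^{*}(G)$. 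For (2), I would note that every component of $G$ is quasisimple, hence perfect with a nonabelian simple central quotient, and therefore non-solvable; thus $E(G)\neq 1$ forces $F^{*}(G)$ to be non-solvable, so contrapositively a solvable $F^{*}(G)$ yields $E(G)=1$ and $F^{*}(G)=E(G)F(G)=F(G)$.

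For (1), I would first match the Fitting subgroups: $F(G)\cap N$ is nilpotent and normal in $N$, so $F(G)\cap N\le F(N)$, while $F(N)$ is characteristic in $N$, hence normal in $G$ and nilpotent, so $F(N)\le F(G)$; thus $F(N)=F(G)\cap N$. Next I would identify the layers using the standard fact that the components of $N$ are precisely the components of $G$ contained in $N$ (a component of $N$ is subnormal and quasisimple in $G$, hence a component of $G$ inside $N$, and conversely). Writing $E(G)$ as the central product of the components $L_1,\dots,L_r$ of $G$ and using that $N\cap L_i\unlhd L_i$ is either all of $L_i$ or central in $L_i$, one sees that $N\cap E(G)$ exceeds $E(N)$ only by central elements, which lie in $Z(E(G))\unlhd G$ and hence in $F(G)$. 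Combining this with the matching of Fitting subgroups yields $F^{*}(N)=E(N)F(N)=N\cap E(G)F(G)=N\cap F^{*}(G)$.

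For (4), with $P\le Z(G)$, I would first establish $F(G/P)=F(G)/P$: the inclusion $\ge$ from $F(G)/P$ being nilpotent and normal is clear, and for the reverse the preimage $M$ of $F(G/P)$ has $M/P$ nilpotent with $P\le Z(M)$, so $M$ is nilpotent and $M\le F(G)$. For the full generalized Fitting subgroup I would argue through quasinilpotence: since $P$ is central, $F^{*}(G)/P$ is a quasinilpotent normal subgroup of $G/P$, whence $F^{*}(G)/P\le F^{*}(G/P)$, while the preimage of $F^{*}(G/P)$ is a central extension of a quasinilpotent group, hence quasinilpotent and therefore contained in $F^{*}(G)$, giving the reverse inclusion.

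The main obstacle is the layer-matching in (1) and (4): pinning down the components of a normal subgroup, respectively of a central quotient, and controlling the stray central elements so that they are absorbed correctly by the Fitting part. This is exactly where the self-centralizing property $C_G(F^{*}(G))=Z(F^{*}(G))$ and the stability of quasinilpotence under central extensions and central quotients carry the argument, while the surrounding manipulations are routine. Since the statement is classical, one may alternatively simply cite \cite[Chapter X]{HU3} and \cite[Lemma 4]{BA}.
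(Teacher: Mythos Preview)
Your sketch is essentially correct, but note that the paper does not give its own proof of this lemma at all: it is stated with references to \cite[Chapter~X]{HU3} and \cite[Lemma~4]{BA} and used as a black box. So there is no ``paper's approach'' to compare against beyond the citation you yourself mention at the end.

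On the content of your argument: parts (2) and (3) are clean and complete. For (4) your appeal to the stability of quasinilpotence under central extensions and central quotients is the right mechanism and matches the standard treatment. For (1) your component-by-component bookkeeping works, but it is more laborious than necessary; the slicker route is to observe that $F^{*}(G)\cap N$ is a normal subgroup of the quasinilpotent group $F^{*}(G)$, hence itself quasinilpotent (this is the basic closure property from \cite[Chapter~X]{HU3}), and therefore $F^{*}(G)\cap N\le F^{*}(N)$, while the reverse inclusion follows since $F^{*}(N)$ is characteristic in $N$, normal in $G$, and quasinilpotent. This avoids the delicate step of decomposing $E(G)\cap N$ and tracking the stray central elements.
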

\begin{lemma}\label{111111}
Let $p$ be a prime and $\mathcal{F}$ be a saturated fusion system on a finite $p$-group $S$. Assume that the fusion system $N_{\mathcal{F}} (Q)$ is supersolvable for any $Q \in \mathcal{E}_{\mathcal{F}} ^{*}$, then $\mathcal{F}$ is supersolvable.\end{lemma}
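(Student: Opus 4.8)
The plan is to show that the hypothesis forces $\mathcal{F}$ to have no $\mathcal{F}$-essential subgroups at all, so that $\mathcal{E}_{\mathcal{F}}^{*}=\{S\}$. Alperin's fusion theorem for saturated fusion systems then says that every morphism of $\mathcal{F}$ is a restriction of an element of $\mathrm{Aut}_{\mathcal{F}}(S)$, i.e.\ $\mathcal{F}=N_{\mathcal{F}}(S)$; since $S\in\mathcal{E}_{\mathcal{F}}^{*}$, the hypothesis tells us $N_{\mathcal{F}}(S)$ is supersolvable, and we are done.

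The heart of the matter is an auxiliary fact I would prove first: \emph{a supersolvable saturated fusion system $\mathcal{G}$ over a $p$-group $P$ has no $\mathcal{G}$-essential subgroups.} Suppose $R$ were $\mathcal{G}$-essential, and fix a series $1=P_{0}\le P_{1}\le\cdots\le P_{m}=P$ of strongly $\mathcal{G}$-closed subgroups with each $P_{i+1}/P_{i}$ cyclic. Since each $P_{i}$ is strongly $\mathcal{G}$-closed, each morphism in $\mathrm{Aut}_{\mathcal{G}}(R)$ carries $R\cap P_{i}$ into $P_{i}$, so every $R\cap P_{i}$ is $\mathrm{Aut}_{\mathcal{G}}(R)$-invariant, and $(R\cap P_{i+1})/(R\cap P_{i})$ embeds in $P_{i+1}/P_{i}$ and hence is cyclic. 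Because a cyclic $p$-group has a unique subgroup of each order, I can refine $\{R\cap P_{i}\}$ to an $\mathrm{Aut}_{\mathcal{G}}(R)$-invariant chain $1=R_{0}<R_{1}<\cdots<R_{k}=R$ all of whose factors have order $p$. Then $\mathrm{Aut}_{\mathcal{G}}(R)$ stabilises a flag of $R$ with factors of order $p$: the kernel of the induced homomorphism $\mathrm{Aut}_{\mathcal{G}}(R)\to\prod_{j}\mathrm{Aut}(R_{j+1}/R_{j})$ is a $p$-group (stability group theorem) while its image is a $p'$-group, so $\mathrm{Aut}_{\mathcal{G}}(R)$, and with it $\mathrm{Out}_{\mathcal{G}}(R)$, is $p$-closed. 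A $p$-closed group has no strongly $p$-embedded subgroup, contradicting the essentiality of $R$.

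With this in hand, suppose for contradiction that $\mathcal{F}$ possesses an essential subgroup $Q$; then $Q$ is $\mathcal{F}$-centric, fully normalised in $\mathcal{F}$, and $\mathrm{Out}_{\mathcal{F}}(Q)$ contains a strongly $p$-embedded subgroup. Because $Q$ is fully normalised, $N_{\mathcal{F}}(Q)$ is a saturated fusion system over $N_{S}(Q)$, and it is supersolvable since $Q\in\mathcal{E}_{\mathcal{F}}^{*}$. In $N_{\mathcal{F}}(Q)$ the subgroup $Q$ is normal (hence fully normalised there), it is centric there because $C_{N_{S}(Q)}(Q)=C_{S}(Q)=Z(Q)$, and $\mathrm{Aut}_{N_{\mathcal{F}}(Q)}(Q)=\mathrm{Aut}_{\mathcal{F}}(Q)$, so $\mathrm{Out}_{N_{\mathcal{F}}(Q)}(Q)=\mathrm{Out}_{\mathcal{F}}(Q)$ still has a strongly $p$-embedded subgroup. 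Thus $Q$ is $N_{\mathcal{F}}(Q)$-essential, which contradicts the auxiliary fact applied to the supersolvable fusion system $N_{\mathcal{F}}(Q)$. Hence $\mathcal{F}$ has no essential subgroups, and the first paragraph completes the argument.

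The hard part is the auxiliary fact: recognising that a strongly $\mathcal{G}$-closed series with cyclic factors cuts down, inside any subgroup $R$, to an $\mathrm{Aut}_{\mathcal{G}}(R)$-stable flag with factors of order $p$, which makes $\mathrm{Out}_{\mathcal{G}}(R)$ $p$-closed and therefore prevents $R$ from being essential; together with the bookkeeping showing that the normaliser system $N_{\mathcal{F}}(Q)$ inherits the essentiality of $Q$. The remaining ingredients — saturation of $N_{\mathcal{F}}(Q)$ for $Q$ fully normalised, the stability group theorem, the fact that a $p$-closed group has no strongly $p$-embedded subgroup, and Alperin's fusion theorem — are standard and present no difficulty.
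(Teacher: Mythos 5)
Your proposal is correct and follows essentially the same route as the paper: show that every $Q\in\mathcal{E}_{\mathcal{F}}^{*}$ fails to be $\mathcal{F}$-essential because $\mathrm{Out}_{\mathcal{F}}(Q)$ is $p$-closed and hence has no strongly $p$-embedded subgroup, deduce $\mathcal{E}_{\mathcal{F}}^{*}=\{S\}$ so that $S$ is normal in $\mathcal{F}$, and conclude $\mathcal{F}=N_{\mathcal{F}}(S)$ is supersolvable by hypothesis. The only divergence is cosmetic: where the paper simply cites \cite[Proposition 1.3]{SN} for the $p$-closedness of $\mathrm{Aut}_{\mathcal{F}}(Q)$ in a supersolvable fusion system, you reprove that fact from scratch via the $\mathrm{Aut}$-invariant flag with factors of order $p$ and the stability group theorem, and you phrase the contradiction as ``$Q$ would be $N_{\mathcal{F}}(Q)$-essential'' rather than working with $\mathrm{Out}_{\mathcal{F}}(Q)$ directly.
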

\begin{proof}
Let $Q \in \mathcal{E}_{\mathcal{F}} ^{*}$. It follows from $N_{\mathcal{F}} (Q)$ is supersolvable and {{\cite[Proposition 1.3]{SN}}} that ${\rm Aut}_{N_{\mathcal{F}} (Q)} (Q) = {\rm Aut}_{\mathcal{F}} (Q)$ is $p$-closed. Hence, ${\rm Out}_{\mathcal{F}} (Q)$ is $p$-closed since ${\rm Out}_{\mathcal{F}} (Q)$ is a quotient group of ${\rm Aut}_{\mathcal{F}} (Q)$. By {{\cite[Proposition A.7 (c)]{AK}}}, we conclude that there is no subgroup $H$ of a $p$-closed finite group $G$ such that $H$ is strongly $p$-embedded with respect to $G$. Therefore ${\rm Out}_{\mathcal{F}} (Q)$ does not possess a strongly $p$-embedded subgroup, which implies that $Q$ is not $\mathcal{F}$-essential. Thus we get that $\mathcal{E}_{\mathcal{F}} ^{*} = \lbrace S \rbrace$. Now it indicates from {{\cite[Part I, Proposition 4.5]{AK}}} that $S$ is normal in $\mathcal{F}$. Hence the proof is complete since $N_{\mathcal{F}} (S) = \mathcal{F}$ is supersolvable by our hypothesis.
\end{proof}

\begin{lemma}[{{\cite[Lemma 2.9]{FJ}}}]\label{222222}
Let $G$ be a finite group, $p \in \pi(G)$, and $S$ be a Sylow $p$-subgroup of $G$. Suppose that for any proper subgroup $H$ of $G$ with $O_p (G) <S \cap H$ and $S \cap H \in {\rm Syl}_p (H)$, the fusion system $\mathcal{F}_{S \cap H} (H)$ is supersolvable. Assume additionally that $O_p (G) \leq Z_{\mathfrak{U}} (G)$. Then $\mathcal{F}_{S} (G)$ is supersolvable.
\end{lemma}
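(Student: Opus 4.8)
The plan is to reduce the global assertion to a local one via Lemma~\ref{111111}, and then to show that, apart from one degenerate configuration, every member of $\mathcal{E}_{\mathcal{F}}^{*}$ has a normalizer fusion system that is realized by a \emph{proper} subgroup of $G$ satisfying the hypothesis. Throughout, write $\mathcal{F} = \mathcal{F}_S(G)$ and set $P = O_p(G)$. Two standard facts about group fusion systems form the backbone. First, since $P \unlhd G$ and $P \leq S$, conjugations fix $P$ setwise and extend, so $P$ is normal in $\mathcal{F}$ and hence $P \leq O_p(\mathcal{F})$; moreover a subgroup normal in $\mathcal{F}$ lies in every $\mathcal{F}$-centric radical subgroup. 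Second, for a fully normalized $Q$ one has $N_S(Q) \in {\rm Syl}_p(N_G(Q))$, $N_{\mathcal{F}}(Q) = \mathcal{F}_{N_S(Q)}(N_G(Q))$, and ${\rm Aut}_{\mathcal{F}}(Q) = {\rm Aut}_G(Q)$.

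First I would dispose of the case $S \unlhd G$. Here $S = P \leq Z_{\mathfrak{U}}(G)$, so refining $1 \leq S \leq G$ to a chief series gives $1 = S_0 \leq S_1 \leq \cdots \leq S_n = S$ with each $S_i \unlhd G$ and each factor $S_{i+1}/S_i$ a $\mathfrak{U}$-central, hence cyclic, chief factor. Because each $S_i$ is normal in $G$ and contained in $S$, it is strongly $\mathcal{F}$-closed (conjugation sends $S_i$ into itself), so this series witnesses the supersolvability of $\mathcal{F}$ directly from the definition.

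Now assume $S$ is not normal in $G$ and invoke Lemma~\ref{111111}: it suffices to prove $N_{\mathcal{F}}(Q)$ is supersolvable for every $Q \in \mathcal{E}_{\mathcal{F}}^{*}$. For $Q = S$ this is immediate, since $N_G(S) < G$ and $P < S = S \cap N_G(S) \in {\rm Syl}_p(N_G(S))$, so the hypothesis gives that $N_{\mathcal{F}}(S) = \mathcal{F}_S(N_G(S))$ is supersolvable. For an $\mathcal{F}$-essential $Q$ (so $Q \neq S$, and $Q$ is $\mathcal{F}$-centric, $\mathcal{F}$-radical, and fully normalized) I would first deduce $P \leq Q$ from the fact quoted above; combined with the proper containment $Q < N_S(Q)$ this yields $P < N_S(Q)$. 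It then remains only to check $N_G(Q) < G$, and this is the single real obstacle.

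The crux is to rule out a normal essential subgroup. If $N_G(Q) = G$ then $Q \unlhd G$, so $Q \leq P$; together with $P \leq Q$ this forces $Q = P = O_p(G)$. Here the hypothesis $O_p(G) \leq Z_{\mathfrak{U}}(G)$ enters decisively: every $G$-chief factor inside $P$ is cyclic of order $p$, so ${\rm Aut}_G(P) = G/C_G(P)$ stabilizes a $G$-chief series of $P$ whose factors have $p'$-automorphism groups ${\rm Aut}(C_p) \cong C_{p-1}$. By the stability group theorem the kernel of the induced action is a normal $p$-subgroup, while the quotient embeds into a $p'$-group; hence ${\rm Aut}_{\mathcal{F}}(P) = {\rm Aut}_G(P)$ is $p$-closed, and so is its quotient ${\rm Out}_{\mathcal{F}}(P)$. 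By \cite[Proposition A.7(c)]{AK} a $p$-closed group has no strongly $p$-embedded subgroup, so $P$ is not $\mathcal{F}$-essential, a contradiction. Therefore $N_G(Q) < G$, and applying the hypothesis to $H = N_G(Q)$ (with $S \cap H = N_S(Q) \in {\rm Syl}_p(H)$ and $P < N_S(Q)$) shows $N_{\mathcal{F}}(Q) = \mathcal{F}_{N_S(Q)}(N_G(Q))$ is supersolvable. Having handled all members of $\mathcal{E}_{\mathcal{F}}^{*}$, Lemma~\ref{111111} completes the proof. As indicated, the essential work is the $Z_{\mathfrak{U}}$-computation forcing $O_p(G)$ to be non-essential; the rest is bookkeeping with the dictionary between $N_{\mathcal{F}}(Q)$ and $N_G(Q)$.
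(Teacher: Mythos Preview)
The paper does not supply its own proof of this lemma; it is quoted verbatim from \cite[Lemma~2.9]{FJ} and used as a black box in the proof of Theorem~\ref{66666}. So there is no in-paper argument to compare against, and your proposal should be assessed on its own merits.

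Your argument is correct. The skeleton---reduce via Lemma~\ref{111111} to normalizer fusion systems of members of $\mathcal{E}_{\mathcal{F}}^{*}$, identify $N_{\mathcal{F}}(Q)$ with $\mathcal{F}_{N_S(Q)}(N_G(Q))$ for fully normalized $Q$, and feed $H=N_G(Q)$ back into the hypothesis---is exactly the machinery the paper has set up. The two points that require genuine work are both handled cleanly: (i) the degenerate case $S\unlhd G$, where $S=O_p(G)\leq Z_{\mathfrak{U}}(G)$ and a $G$-chief series inside $S$ directly exhibits supersolvability of $\mathcal{F}$; and (ii) the exclusion of $Q=O_p(G)$ from the essential subgroups, where your stability-group computation shows ${\rm Aut}_G(O_p(G))$ is $p$-closed (normal $p$-kernel with quotient embedding in a product of copies of ${\rm Aut}(C_p)\cong C_{p-1}$), so ${\rm Out}_{\mathcal{F}}(O_p(G))$ cannot contain a strongly $p$-embedded subgroup by \cite[Proposition~A.7(c)]{AK}. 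The containment $O_p(G)\leq Q$ for essential $Q$ is justified, as you note, by $O_p(G)\leq O_p(\mathcal{F})$ and the fact that normal-in-$\mathcal{F}$ subgroups lie in every centric radical subgroup; together with $Q<N_S(Q)$ this gives the strict inequality $O_p(G)<S\cap N_G(Q)$ needed to invoke the hypothesis.
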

\section{Characterizations for $p$-supersolvability of finite groups}\label{3}
\begin{proof}[Proof of Theorem \ref{3.1}]
Assume that the theorem is false and let $G$ be a counterexample of minimal order. Now for the ease of reading we break the argument into separately stated steps.
\begin{itemize}
\item[\textbf{Step 1.}] $G$ is a minimal non-nilpotent group, $G = P  \rtimes Q$, where $P$ is the normal Sylow $p$-subgroup of $G$ with ${\rm{exp}}(P) = p$ or $4$ if $p=2$, $P / \Phi (P)$ is a chief factor of $G$. $Q$ is a Sylow $q$-subgroup of $G$.
\end{itemize}
Let $L$ be a proper subgroup of $G$. We conclude from Isomorphism Theorem that
$$L/L \cap N \cong LN/N \leq G/N,~\text{where}~G/N ~\text{is}~p\,\text{-nilpotent}.$$
By inheritance of $p$-nilpotency, it follows that $L/L \cap N$ is $p$-nilpotent. By our hypothesis and lemma \ref{1}(1), it yields that every cyclic subgroup of $L \cap N$ with order $4(p=2)$ is weakly $S\Phi$-supplemented in $L$. Since every minimal subgroup of $N$ of order $p$ is contained in $Z(G)$, and $Z(G) \cap L \leq Z(L)$, we have that every minimal subgroup of $N \cap L$ of order $p$ is contained in $Z(L)$. Hence it follows from $N \cap L \unlhd L$ that $L$ satisfies our hypothesis. Thus we have that $L$ is $p$-nilpotent and we conclude by the randomness of $L$ that $G$ is a minimal non-$p$-nilpotent group, i.e. a minimal non-nilpotent group. By {{\cite[Chapter IV, Theorem 5.4]{HU1}}} and {{\cite[Chapter IV, Theorem 3.4.11]{GW}}}, it indicates that  there exists a normal $p$-subgroup $P$ such that $G=P \rtimes Q$, where $Q$ is a Sylow $q$-subgroup of $G$ with $q \neq p$, $P / \Phi (P)$ is a chief factor of $G$, and ${\rm{exp}}(P)=p$ or $4(p=2)$.
\begin{itemize}
\item[\textbf{Step 2.}] There exists an element in $P$ of order $4$ and $p=2$.
\end{itemize}
It follows from {{\cite[Chapter IV, Theorem 3.4.11]{GW}}} that $P = G^{\mathfrak{N}}$, where $G^{\mathfrak{N}}$ denotes the smallest normal subgroup of $G$ such that $G /G^{\mathfrak{N}}$ is nilpotent. Hence we conclude that $P \leq N$. Assume that the statement is false, then ${\rm{exp}} (P) =p$ by step 1. By our hypothesis, $P \leq Z(G)$. Therefore we have that $[P,Q]=1$ and so $Q \unlhd G$, a contradiction to the fact that $G$ is not nilpotent and we are done.
\begin{itemize}
\item[\textbf{Step 3.}] Final contradiction.
\end{itemize}
Suppose firstly that $P$ is not cyclic. We predicate that there exists an element $x_0 \in P$ of order $4$ such that $\langle x_0 \rangle$ is not S-permutable in $G$. Assume that the statement is false, then every element $x$ of $P$ of order $4$ is S-permutable in $G$. Then for any $Q$ being Sylow subgroup of $G$, we have that $\langle x \rangle Q = Q \langle x \rangle$. Since $P$ is not cyclic, it follows that $Q \langle x \rangle < G$. By the fact that $G$ is minimal non-nilpotent, we have that $Q \langle x \rangle$ is nilpotent. Hence we conclude that $[\langle x \rangle, Q]=1$ holds for every $x \in P$ of order $4$. Since every element of $P$ of order $2$ is contained in $Z(G)$, it indicates from ${\rm{exp}}(P) = 4$ that $[P,Q]=1$, a contradiction to the fact that $G$ is not nilpotent. Therefore the statement is true and let $x_0$ be an element of $P$ of order $4$ such that $\langle x_0 \rangle$ is not S-permutable in $G$. Since all of the subgroups of $\langle x_0 \rangle$ are exactly $\langle x_0 \rangle$, $\langle x_0 ^2 \rangle$, 1, it yields that $\langle x_0 \rangle_{sG} \leq \langle x_0 ^2 \rangle$. By our hypothesis, $\langle x_0 \rangle$ is weakly $S\Phi$-supplemented in $G$. It follows immediately that there exists a subgroup $T \leq G$ such that $G = \langle x_0 \rangle T$, and $T \cap \langle x_0 \rangle \leq \Phi(\langle x_0 \rangle) \langle x_0 \rangle_{sG} \leq \langle x_0 ^2 \rangle$. As $P' <P$, $P' ~\text{char}~ P$, and $P/ \Phi (P)$ is a chief factor of $G$, we conclude that $P' \leq \Phi (P)$. Hence we have that $P/\Phi (P)$ is an abelian minimal normal subgroup of $G /\Phi (P)$. By $P/\Phi (P) \cdot T\Phi (P)/\Phi (P) = G/\Phi (P)$ and {{\cite[Chapter I, Theorem 1.7.1]{KS}}}, it indicates that $P/\Phi (P) \leq T\Phi (P)/\Phi (P)$ or $P/\Phi (P) \cap T\Phi (P)/\Phi (P)=1$. The former case suggests that $P=P \cap T \Phi (P) = \Phi (P) (P \cap T)$. Then $P \cap T = P$ and so $\langle x_0 \rangle \cap T = \langle x_0 \rangle$, a contradiction. Hence we conclude that $\Phi (P) = P \cap T \Phi (P)$, which implies that $\Phi (P) =\Phi (P) (P \cap T)$, i.e. $P \cap T \leq \Phi (P)$. Thus we have that $P = P \cap \langle x_0 \rangle T = \langle x_0 \rangle (P \cap T) = \langle x_0 \rangle \Phi (P)$. Therefore we get that $P = \langle x_0 \rangle$, a contradiction to our assumption that $P$ is not cyclic. Hence we have that $P$ is cyclic and $P$ is a $2$-group. By {{\cite[Chapter IV, Theorem 2.8]{HU1}}}, it yields that $G$ has a normal $2'$-subgroup. Hence we have that $G$ is nilpotent, a final contradiction and no such counterexample of $G$ exists.
\end{proof}

\begin{proof}[Proof of Theorem \ref{3.2}]
Suppose that the theorem is not true and let $G$ be a counterexample of minimal order. Let $M$ be a proper normal subgroup of $G$. We argue that $M$ satisfies our hypothesis. Again, we conclude from Isomorphism theorem that 
$$M/M \cap N \cong MN/N \leq G/N,~\text{where}~G/N ~\text{is nilpotent}.$$
By inheritance of nilpotency we get that $M/M \cap N$ is nilpotent. By lemma \ref{2}(1), we have that $F^{*} (M \cap N) \leq F^{*} (N)$. It follows from $Z(M) \leq Z(G)$ that every minimal subgroup of $F^{*} (M \cap N)$ is contained in $Z(M)$, every cyclic subgroup of $F^{*} (M \cap N)$ of order $4$ is weakly $S\Phi$-supplemented in $M$ by lemma \ref{1} (1). Now it yields that $M$ satisfies our hypothesis, hence by the choice of $G$ we conclude that $M$ is nilpotent. Therefore every proper normal subgroup of $G$ is nilpotent, and we have that $F(G)$ is the unique maximal normal subgroup of $G$. Now we predicate that $G =N = G^{\mathfrak{N}}$. Suppose that $N<G$, then both $N$ and $G/N$ are nilpotent and it indicates from lemma \ref{2}(2) that $F^{*} (N) = F(N) = N$. Now it is clear that $N$ satisfies the hypothesis of theorem \ref{3.1}, hence we have that $G$ is $p$-nilpotent for all primes $p$, i.e. $G$ is nilpotent, a contradiction. Therefore $N =G$. Now suppose again that $G^{\mathfrak{N}}<G$, it follows directly that both $G^{\mathfrak{N}}$ and $G / G^{\mathfrak{N}}$ are nilpotent. Hence we conclude from lemma \ref{2}(2) that 
$$F^{*} (G^{\mathfrak{N}}) = F(G^{\mathfrak{N}}) = G^{\mathfrak{N}} \leq F(G) \leq F^{*} (G) = F^{*} (N).$$
Therefore one can easily find that $G^{\mathfrak{N}}$ satisfies the hypothesis of theorem \ref{3.1}, i.e. $G$ is nilpotent, a contradiction as well. Therefore we get that $G = N = G^{\mathfrak{N}}$. Now let $p$ be the smallest prime dividing the order of $F^{*} (G)$, and $P$ be a Sylow $p$-subgroup of $F^{*} (G)$. Assume that $F^{*} (G) = G$, it follows from lemma \ref{2}(3) that $F^{*} (F^{*} (G)) = F^{*} (G) = F^{*} (N)$. Hence $F^{*} (G)$ satisfies the hypothesis of theorem \ref{3.1}, and we get that $G$ is nilpotent again, a obvious contradiction. Thus $F^{*} (G)$ is a proper normal subgroup of $G$ and we have that $P$ is normal in $G$ by nilpotency of $F^{*} (G)$. Now let $Q$ be an arbitrary Sylow $q$-subgroup of $G$ with $q \neq p$ be a prime. We predicate that $PQ$ is $p$-nilpotent. In fact, it is obvious that $PQ/P$ is $p$-nilpotent. Also, it follows from $P \leq F^{*} (G) = F^{*}(N)$ that every minimal subgroup of $P$ of order $p$ is contained in $Z(PQ)$. By lemma \ref{1}(1), we conclude that every cyclic subgroup of $P$ of order $4$ is weakly $S\Phi$-supplemented in $PQ$. Hence $PQ$ satisfies the hypothesis of theorem \ref{3.1} and so $PQ$ is $p$-nilpotent. Thus $Q \unlhd PQ$ and we have that $[P,Q]=1$. Hence we get that $Q \leq C_G (P)$, and by randomness of $Q$ it indicates that $O^p (G) \leq C_G (P)$. Note that $G/C_G (P)$ is a $p$-group, we have that $G^{\mathfrak{N}} \leq C_G(P)$, i.e. $C_G (P) = G$. It follows directly from lemma \ref{2}(4) that $P \leq Z(G)$, and $F^{*} (G/P) = F^{*} (G) /P$. By the minimality of $p$ we get that $2 \nmid |F^{*} (G) /P|$. Let $H/P$ be a minimal subgroup of $F^{*} (G)/P$. Since $P \in {\rm{Syl}}_p (F^{*} (G))$, we get that $H = RP$, where $R$ is a minimal subgroup of $F^{*} (G) = F^{*} (N)$, and so $R$ is contained in $Z(G)$. Thus $H/P = RP/P$ is contained in $Z(G/P)$. By the randomness of $H/P$, $G/P$ satisfies the hypothesis and so by the choice of $G$, it yields that $G/P$ is nilpotent. Hence we conclude that $G$ is nilpotent, a final  contradiction and no such counterexample of $G$ exists.
\end{proof}
\begin{theorem}\label{3.3}
Let $p$ be the smallest prime dividing the order of a group $G$ and let $P$ be a Sylow $p$-subgroup of $G$. Suppose that $p$ is odd, then $G$ is  $p$-nilpotent if and only if every cyclic subgroup of $P$ of order prime not having a supersoluble supplement in $G$ is weakly $S\Phi$-supplemented in $G$.
\end{theorem}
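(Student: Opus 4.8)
The plan is to prove the two implications separately, with the substance in the ``if'' direction, and to arrange matters so that the ``supersoluble supplement'' clause is disposed of at the very start. The organising observation is that a supersoluble supplement of a minimal subgroup of $P$ already forces $p$-nilpotency: if $\langle x\rangle\le P$ has order $p$ and $G=\langle x\rangle T$ with $T$ supersoluble, then $|G:T|=|\langle x\rangle|/|\langle x\rangle\cap T|\in\{1,p\}$; if $G=T$, then $G$ is supersoluble and, $p$ being the smallest prime dividing $|G|$, $G$ is $p$-nilpotent (it is well known that a supersoluble group is $p$-nilpotent when $p$ is its least prime divisor); if $|G:T|=p$, then $T\unlhd G$ since $p$ is the least prime divisor, $T$ is supersoluble hence $p$-nilpotent, so $O_{p'}(T)$ is characteristic in $T$ and normal in $G$, and $G/O_{p'}(T)$ is an extension of the $p$-group $T/O_{p'}(T)$ by $G/T\cong C_p$, hence a $p$-group, so $G$ is $p$-nilpotent. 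Therefore, in proving ``if'', I may assume that no cyclic subgroup of $P$ of order $p$ has a supersoluble supplement in $G$.

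For the ``if'' direction I would take a counterexample $G$ of least order. By the previous paragraph no cyclic subgroup of $P$ of order $p$ has a supersoluble supplement in $G$, so the hypothesis forces every such subgroup to be weakly $S\Phi$-supplemented in $G$. Weak $S\Phi$-supplementation is stable under $G$-conjugacy: conjugating a decomposition $G=\langle x\rangle T$ with $\langle x\rangle\cap T\le\Phi(\langle x\rangle)\langle x\rangle_{sG}$ gives $G=\langle x\rangle^{g}T^{g}$ with $\langle x\rangle^{g}\cap T^{g}\le\Phi(\langle x\rangle^{g})(\langle x\rangle^{g})_{sG}$, since $S$-permutability in $G$ is conjugation-invariant, so $(\langle x\rangle_{sG})^{g}=(\langle x\rangle^{g})_{sG}$. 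As every cyclic subgroup of $G$ of order $p$ is $G$-conjugate into $P$, it follows that every cyclic subgroup of $G$ of order $p$ is weakly $S\Phi$-supplemented in $G$. Now apply Theorem \ref{1.5} with $N:=G$: $p$ is odd and is the least prime divisor of $|G|$, so $|G|$ is odd, $G$ has no cyclic subgroup of order $4$, and the hypothesis of Theorem \ref{1.5} reduces to exactly what has just been shown; hence $G$ is $p$-nilpotent, contradicting the choice of $G$.

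For the ``only if'' direction, suppose $G$ is $p$-nilpotent, so $G=O_{p'}(G)\rtimes P$, and let $\langle x\rangle\le P$ have order $p$ with no supersoluble supplement in $G$; I must show $\langle x\rangle$ is weakly $S\Phi$-supplemented in $G$. If $\langle x\rangle$ is complemented in $G$ there is nothing to prove. Otherwise $x\in\Phi(P)$ and the only supplement of $\langle x\rangle$ in $G$ is $G$ itself (a proper supplement, having index $p$, would be a complement), so it suffices to prove $\langle x\rangle\le O_p(G)$, i.e. that $\langle x\rangle$ is $S$-permutable in $G$. I would distinguish whether $x$ centralises $O_{p'}(G)$: if it does, then $\langle x\rangle$ permutes with every Sylow $q$-subgroup ($q\ne p$) since these lie in $O_{p'}(G)$, and a short computation with conjugates of $P$ shows $\langle x\rangle$ lies in every Sylow $p$-subgroup, so $\langle x\rangle\le O_p(G)$ and $T=G$ exhibits $\langle x\rangle$ as weakly $S\Phi$-supplemented; if it does not, one uses the $p$-nilpotent structure of $G$, together with the minimality of $p$, to produce a supersoluble supplement of $\langle x\rangle$, against the assumption.

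The main obstacle in the ``if'' direction is the bookkeeping: the ``supersoluble supplement'' clause must be cleared away first (the opening paragraph), and weak $S\Phi$-supplementation must be transferred from the minimal subgroups of $P$ to all minimal subgroups of $G$ before Theorem \ref{1.5} can be invoked with $N=G$; after that the argument is immediate. In the converse direction the genuinely delicate point — and the place where the hypothesis that $p$ is the smallest prime enters essentially — is the claim that a minimal subgroup of $P$ contained in $\Phi(P)$ but not in $O_p(G)$ must possess a supersoluble supplement in $G$.
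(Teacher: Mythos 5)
Your sufficiency argument is correct and takes a genuinely different, shorter route than the paper's. The opening observation --- that a supersoluble supplement $T$ of a subgroup $\langle x\rangle\le P$ of order $p$ has index $1$ or $p$ in $G$, and in either case yields a normal $p$-complement because $p$ is the least prime divisor --- is sound, and it cleanly splits the hypothesis into ``some minimal subgroup of $P$ has a supersoluble supplement (done at once)'' versus ``every minimal subgroup of $P$ is weakly $S\Phi$-supplemented''. The transfer to all subgroups of order $p$ of $G$ by conjugacy is valid, since conjugation permutes the $S$-permutable subgroups of $G$ and hence $(\langle x\rangle_{sG})^{g}=(\langle x\rangle^{g})_{sG}$; and as $|G|$ is odd there are no subgroups of order $4$, so Theorem \ref{1.5} applies with $N=G$. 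The paper instead reproves this direction from scratch by analysing a minimal non-$p$-nilpotent subgroup $A=A_p\rtimes A_q$ with $\exp(A_p)=p$; your route buys brevity at the cost of leaning on the quoted result of \cite{WU}, which is fair since the paper states Theorem \ref{1.5} as known.

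The necessity direction is where the genuine gap lies. You correctly reduce to $x\in\Phi(P)$, where the only supplement of $\langle x\rangle$ is $G$ itself, so weak $S\Phi$-supplementation is equivalent to $\langle x\rangle$ being $S$-permutable in $G$ (note that $\langle x\rangle\le O_p(G)$ is strictly weaker than $S$-permutability --- a subgroup of order $2$ of $O_2(A_4)$ does not permute with a Sylow $3$-subgroup --- although your argument in the centralising case does establish permutability with every Sylow subgroup, so that case is fine). The claim you leave unproved, that if $x$ fails to centralise $O_{p'}(G)$ then $\langle x\rangle$ acquires a supersoluble supplement, is false. Take $G=V\rtimes\langle c\rangle$ with $V=\mathbb{F}_{7^3}^{+}\cong C_7^{3}$ and $c$ of order $9$ acting by multiplication by a primitive ninth root of unity $\zeta\in\mathbb{F}_{343}^{*}$ (possible since $9\mid 342$, and irreducible since $7$ has order $3$ modulo $9$). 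Here $p=3$ is the smallest prime divisor, $G$ is $3$-nilpotent with normal $3$-complement $V$, and $V$ is a non-cyclic minimal normal subgroup, so $G$ is not supersoluble. Put $x=c^{3}\in\Phi(P)$. Since $G'=V$, the unique subgroup of index $3$ is $V\langle x\rangle$, which contains $x$, so $\langle x\rangle$ has no proper supplement and its only supplement is the non-supersoluble group $G$. Moreover $1-\zeta^{3}$ is invertible, so $x\notin\langle c\rangle^{v}$ for $v\ne 0$, whence $\langle x\rangle\langle c\rangle^{v}$ has $27$ elements and cannot be a subgroup; thus $\langle x\rangle$ is not $S$-permutable, $\langle x\rangle_{sG}=1$, and $\langle x\rangle$ is not weakly $S\Phi$-supplemented. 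This blocks your case analysis and indeed appears to contradict the necessity direction as stated; the paper disposes of necessity by citing \cite[Lemma 4.1]{LX}, and you should check precisely what that lemma asserts before relying on this half of the equivalence.
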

\begin{proof}Our proof is proceeded via the two parts.
\begin{itemize}
\item[\textbf{Step 1.}] Necessity of the proof.
\end{itemize}
For any subgroup $H$ of $G$, if $H$ is weakly $\Phi$-supplemented in $G$, then it follows from $\Phi (H) \leq \Phi (H) H_{sG}$ that $H$ is weakly $S\Phi$-supplemented in $G$. Applying {{\cite[Lemma 4.1]{LX}}}, we conclude that every cyclic subgroup of $P$ of prime order not having a supersoluble supplement is weakly $S\Phi$-supplemented in $G$, and the necessity of the proof has finished.
\begin{itemize}
\item[\textbf{Step 2.}] Sufficiency of the proof.
\end{itemize}
Suppose that the theorem is false and let $G$ be a counterexample, i.e. $G$ satisfies our hypothesis, but $G$ is not $p$-nilpotent. Then $G$ contains a minimal non-$p$-nilpotent subgroup $A$. Again, by {{\cite[Chapter IV, Theorem 5.4]{HU1}}} and {{\cite[Chapter IV, Theorem 3.4.11]{GW}}}, it follows that $A$ is minimal non-nilpotent possessing  four properties: (1) $A = A_p \rtimes A_q$, $A_p$ is a Sylow $p$-subgroup of $A$, $A_q $ is a cyclic Sylow $q$-subgroup of $A$, where $q \neq p$ is a prime. (2) $A_p = A^{\mathfrak{N}}$. (3) ${\rm{exp}}(A_p)=p$ or $4$. Since $p$ is odd, ${\rm{exp}}(A_p)=p$. (4) $A_p / \Phi (A_p)$ is a chief factor of $A$. By Sylow Theorem, we may assume that $A_p \leq P$. It indicates from lemma \ref{1}(1) that every cyclic subgroup of $A_p$ of  prime order not having a supersoluble supplement in $A$ is weakly $S\Phi$-supplemented in $A$. Let $x$ be a non-trivial element in $A_p$. Then $o(x)=p$. If $\langle x \rangle$ has a supersoluble supplement in $A$, then there exists a supersoluble subgroup $T \leq A$ such that $A = \langle  x\rangle T$. If $\langle x \rangle \cap T=\langle x \rangle$, it follows directly that $T =A$. Thus $A$ is supersoluble. Now let 
$$\Gamma:1 =P_0 <P_1 < \cdots <P_n = P < B_1 < \cdots <B_m =A$$
be a chief series of $A$. By generalised Jordan-Holder Theorem and the supersolubility of $A$, $P_i /P_{i-1}$, $i=1,2,\cdots,n$ is of prime order. Hence $P_{n-1}$ is a normal subgroup of $A$ and $N:=P_{n-1} A_q$ is a nilpotent proper subgroup of $A$. Note that $|A:N|=p$, it follows  that $N$ is normal in $A$ by the minimality of $p$. Thus we have that $A_q ~{\rm{char}}~ N \unlhd A$, i.e. $A$ is nilpotent, a contradiction. Hence we get that $\langle x \rangle \cap T=1$. It follows from $|\langle x \rangle|=p$ and {{\cite[Exercise 3.1.3]{KS}}} that $T \unlhd A$. Hence $A_q \unlhd A$, a contradiction. Thus $\langle x \rangle$ has no supersoluble supplement in $A$, which implies that $\langle x \rangle$ is weakly $S\Phi$-supplemented in $A$. Assume firstly that $A_p$ is not cyclic. Suppose that every cyclic subgroup of $A$ of order $p$ is S-permutable in $A$. Then $A_q \langle x \rangle = \langle x \rangle A_q < A$ for any cyclic subgroup $\langle x \rangle$ of $A$ of order $p$. Hence $A_q\langle x \rangle $ is nilpotent and so $[A_q,\langle x \rangle]=1$. Since ${\rm{exp}} (P)=p$, we conclude by the randomness of $\langle x \rangle$ that $[P,A_q]=1$, a contradiction. Thus there exists a cyclic subgroup $\langle x_0 \rangle$ of $A$ of order $p$ such that $\langle x_0 \rangle$ is not S-permutable in $A$. Since $\langle x_0 \rangle$ is weakly $S\Phi$-supplemented in $A$, there exists $T \leq A$ such that $A = \langle x_0 \rangle T$ and $T \cap \langle x_0 \rangle  \leq \Phi(\langle x_0 \rangle) \langle x_0 \rangle_{sA} =1$ as $\langle x_0 \rangle$ is not S-permutable in $A$. Now in view of the proof of theorem \ref{3.1}, we conclude that either $A_p \cap T \Phi(A_p) =A_p$ or $A_p \cap T \Phi(A_p) =\Phi(A_p)$. The former case suggests that $T \geq A_p \geq \langle x_0 \rangle$, which indicates that $T \cap \langle x_0 \rangle = \langle x_0 \rangle$, a contradiction. The later case suggests that $A_p \cap T \leq \Phi (A_p)$, which implies that $A_p = A_p \cap \langle x_0 \rangle T = \langle x_0 \rangle (A_p \cap T) = \langle x_0 \rangle \Phi (A_p)$. Thus we have that $\langle x_0 \rangle  = A_p$, a contradiction to our assumption that $A_p$ is not cyclic. Therefore $A_p$ is cyclic and we conclude from {{\cite[Chapter IV, Theorem 2.8]{HU1}}} and the minimality of $p$ that $A$ is nilpotent, a final contradiction and we are done.
\end{proof}
\begin{theorem}\label{3.4}
Let $P$ be a non-trivial normal $p$-subgroup of $G$, where $p$ is the smallest prime dividing the order of $G$. If ${\rm{exp}} (P) = p$, every minimal subgroup of $P$ not containing a supersoluble supplement in $G$ is weakly $S\Phi$-supplemented in $G$, then every chief factor of $G$ below $P$ is cyclic.
\end{theorem}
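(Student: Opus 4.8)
The plan is to induct on $|G|$: assume $G$ is a counterexample of least order, and fix a minimal normal subgroup $N$ of $G$ with $N\le P$ (possible since $P$ is a non-trivial normal $p$-subgroup); since ${\rm exp}(P)=p$, the subgroup $N$ is elementary abelian.

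The first task is to show $|N|=p$. Take $1\ne x\in N$, so $\langle x\rangle$ is a minimal subgroup of $P$ with $\Phi(\langle x\rangle)=1$. If some such $\langle x\rangle$ has a supersoluble supplement $T$ in $G$, then $|G:T|$ divides $p$; if $T=G$ then $G$ is supersoluble, and if $|G:T|=p$ then $T\unlhd G$ (a subgroup of index the least prime divisor of $|G|$ is normal), so $N\cap T\unlhd G$ forces $N\cap T=1$ by minimality of $N$, whence $G=NT=N\times T$ with $N\cong\mathbb{Z}_p$ is again supersoluble; either way all chief factors of $G$ are cyclic, against the choice of $G$. Hence every minimal subgroup of $N$ is weakly $S\Phi$-supplemented in $G$, so for each $1\ne x\in N$ there is $T\le G$ with $G=\langle x\rangle T$ and $\langle x\rangle\cap T\le\langle x\rangle_{sG}$, which is $1$ or $\langle x\rangle$. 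If some $\langle x\rangle$ is not $S$-permutable in $G$, then $\langle x\rangle_{sG}=1$, so $\langle x\rangle\cap T=1$; since $\langle x\rangle\le N$ we get $G=NT$, and as $N\cap T\unlhd G$ is proper in $N$ it equals $1$, so $|N|=|G:T|=p$. In the remaining case every minimal subgroup of $N$ is $S$-permutable in $G$; since an $S$-permutable $p$-subgroup is normalized by $O^p(G)$, and $|{\rm Aut}(\langle x\rangle)|$ divides $p-1$ which is coprime to $|G|$ (as $p$ is the smallest prime divisor of $|G|$), $O^p(G)$ centralizes every $\langle x\rangle$, hence centralizes $N$; thus $G/C_G(N)$ is a $p$-group acting on the non-zero $\mathbb{F}_pG$-module $N$, so $N\cap Z(G)\ne1$, and then $|N|=p$ because a minimal normal subgroup contained in the centre has prime order.

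The second task is to verify that $(G/N,P/N)$ satisfies the hypotheses; then minimality of $|G|$ gives that every chief factor of $G/N$ below $P/N$ is cyclic, and with $|N|=p$ this forces every chief factor of $G$ below $P$ to be cyclic along a chief series refining $1<N<P$, the final contradiction. (If $P=N$, the only chief factor of $G$ below $P$ is $N\cong\mathbb{Z}_p$, already a contradiction, so assume $P>N$.) Now $P/N$ is a non-trivial normal $p$-subgroup of $G/N$ of exponent $p$ and $p$ is still the smallest prime divisor of $|G/N|$, so only the condition on minimal subgroups needs checking. Fix a minimal subgroup $\overline L$ of $P/N$ with no supersoluble supplement in $G/N$, and pick $y\in P$ of order $p$ with $yN$ generating $\overline L$; then $\langle y\rangle$ is a minimal subgroup of $P$. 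The option that $\langle y\rangle$ has a supersoluble supplement $T$ in $G$ is excluded, since $TN/N$ would be a supersoluble supplement of $\overline L$ in $G/N$; and if $\langle y\rangle$ is $S$-permutable in $G$, then $\overline L=\langle y\rangle N/N$ is $S$-permutable in $G/N$ (quotients of $S$-permutable subgroups are such), hence weakly $S\Phi$-supplemented in $G/N$. Otherwise $\langle y\rangle$ is weakly $S\Phi$-supplemented but not $S$-permutable in $G$, so there is $T\unlhd G$ with $|G:T|=p$ and $\langle y\rangle\cap T=1$. If $N\le T$, an order count gives $\overline L\cap(T/N)=1$, so $\overline L$ is complemented in $G/N$. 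Otherwise $N\cap T=1$, and since $|N|=p$ we get $G=N\times T$ with $N\le Z(G)$ and $P=N\times(P\cap T)$; the $T$-component $y_1$ of $y$ then has order $p$ and $L_1:=\langle y_1\rangle\le P\cap T$ is a minimal subgroup of $P$ with $L_1N/N=\overline L$. Applying the hypothesis of $G$ to $L_1$ — its supersoluble-supplement option again being impossible since it would descend to $G/N$ — Lemma~\ref{1}(1) shows $L_1$ is weakly $S\Phi$-supplemented in $T$, and since this property (with $\Phi$ and $S$-permutability) is preserved by isomorphisms, transporting along $T\to G/N$, $t\mapsto tN$, which sends $L_1$ to $\overline L$, shows $\overline L$ is weakly $S\Phi$-supplemented in $G/N$.

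The step I expect to be the main obstacle is precisely this last subcase in the descent to $G/N$, where $N$ is not contained in $T$: one must use the decomposition $G=N\times T$ to replace a minimal subgroup of $P/N$ by a genuine minimal subgroup of $P$ lying inside $T$, apply the hypothesis of $G$ there, and carry the resulting weak $S\Phi$-supplementation across the isomorphism $T\cong G/N$. Apart from Lemma~\ref{1}, the standard facts invoked are that a subgroup of index the smallest prime divisor of $|G|$ is normal, that $S$-permutability is inherited by quotient groups, that $S$-permutable $p$-subgroups are normalized by $O^p(G)$, and that a $p$-group acting on a non-zero $\mathbb{F}_p$-module has a non-zero fixed point.
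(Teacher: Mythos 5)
Your argument is correct, but it runs the induction in the opposite direction from the paper's. The paper first passes to $G/\Phi(P)$ (checking that the hypothesis survives that quotient), reducing to the case where $P$ is elementary abelian; there it takes an arbitrary minimal subgroup $N$ of $P$ (not necessarily normal in $G$) and uses the supplement $T$ only to manufacture the subgroup $P\cap T$, which is normal in $G$ (normalized by $T$ and, since $P$ is abelian, by $N$), has index $p$ in $P$, and inherits the hypothesis verbatim because its minimal subgroups are minimal subgroups of $P$; induction on $P\cap T$ together with the generalised Jordan--H\"older theorem then finishes. You instead fix a minimal normal subgroup $N$ of $G$ inside $P$, prove $|N|=p$ by essentially the same trichotomy (supersoluble supplement, all minimal subgroups $S$-permutable, or a complement of index $p$), handling the $S$-permutable case via $O^p(G)\le N_G(\langle x\rangle)$ and a fixed-point argument where the paper uses the $p$-nilpotency of $Q\langle x\rangle$, and then induct upward on $G/N$. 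The price of your scheme is that the hypothesis must be transported to $G/N$, and the genuinely new content is your treatment of the subcase $N\not\le T$: the decomposition $G=N\times T$, the replacement of $yN$ by the $T$-component $y_1$ of $y$, and the transfer of weak $S\Phi$-supplementation along the isomorphism $T\cong G/N$ after invoking Lemma \ref{1}(1); the paper never faces this because its downward induction stays inside $P$. Both proofs are complete; yours follows the more standard minimal-counterexample/minimal-normal-subgroup template at the cost of this extra transfer step. One tiny point to make explicit in your first case: when $|G:T|=p$ you should note that $N\le T$ is impossible (it would force $T=G$), so that minimality of $N$ really yields $N\cap T=1$ rather than $N\cap T=N$.
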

\begin{proof}
We predicate that $P/\Phi (P)$ is a normal subgroup of $G/\Phi (P)$ satisfying the hypothesis. Clearly we have ${\rm{exp}} (P/\Phi (P)) = p$. Let $H/\Phi (P)$ be a minimal subgroup of $P/\Phi (P)$. Then $H /\Phi (P) = \langle x \rangle \Phi (P)/\Phi (P)$, where $x \in H \setminus \Phi (P)$. It is obvious that $o(x) =p$. By our hypothesis, either $\langle x \rangle $ has a supersoluble supplement in $G$ or $\langle x \rangle$ is weakly $S\Phi$-supplemented in $G$. If $\langle x \rangle$ has a supersoluble supplement $T$ in $G$, we verify that $T\Phi (P)/\Phi (P)$ is a supersoluble supplement of $H/\Phi (P)$ in $G/ \Phi (P)$. Since $T <G$, it follows that $T \Phi (P) <G$. If $1 < T \Phi (P)/\Phi (P) \cap H/\Phi (P)$, then $H/\Phi (P) = T \Phi (P)/\Phi (P) \cap H/\Phi (P)$ by the choice of $H/\Phi (P)$. Hence $G/\Phi (P) = T \Phi (P)/\Phi (P) \cdot H/\Phi (P) = T \Phi (P)/\Phi (P)$, which implies that $T \Phi (P) =G$, a contradiction. Therefore $T\Phi (P)/\Phi (P)$ is a supersoluble supplement of $H/\Phi (P)$ in $G/ \Phi (P)$. If $\langle x \rangle$ is weakly $S\Phi$-supplemented in $G$, then there exists $T \leq G$ such that $G = \langle x \rangle T$, and $T \cap \langle x \rangle  \leq \Phi (\langle x \rangle) \langle x \rangle_{sG} = \langle x \rangle_{sG}$. We predicate that $H/\Phi (P)$ is weakly $S\Phi$-supplemented in $G/\Phi (P)$ and the verification is proceeded via the two parts.
\begin{itemize}
\item[\textbf{Step 1.}] The condition of $\langle x \rangle_{sG}=\langle x \rangle$.
\end{itemize}
It is easy to see that $G/\Phi (P) = H/\Phi (P) \cdot T/\Phi (P)$. Let $Q_0 \Phi (P)/\Phi (P)$ be an arbitrary Sylow $q$-subgroup of $G/\Phi (P)$, where $Q_0$ is a Sylow $q$-subgroup of $G$ with $q \neq p$. Then we conclude from $\langle x \rangle_{sG}=\langle x \rangle$ that $\langle x \rangle Q_0 = Q_0 \langle x \rangle$. Hence we have that 
\begin{align*}
Q_0\Phi (P) /\Phi (P) \cdot \langle x \rangle \Phi (P)/\Phi (P) & = Q_0 \langle x \rangle \cdot \Phi (P)/\Phi (P) =  \langle x \rangle Q_0 \cdot \Phi (P) /\Phi (P)\\
& =\langle x \rangle \Phi (P)/\Phi (P) \cdot Q_0\Phi (P) /\Phi (P). 
\end{align*} 
Now let $P_0 /\Phi (P)$ be a Sylow $p$-subgroup of $G/\Phi (P)$, where $P_0$ is a Sylow $p$-subgroup of $G$. Since $\langle x \rangle P_0 = P_0 \langle x \rangle$, it follows from the same method that 
$$P_0 /\Phi (P) \cdot \langle x \rangle \Phi (P)/\Phi (P) = \langle x \rangle \Phi (P)/\Phi (P) \cdot  P_0 /\Phi (P) .$$
By the randomness of $Q$ and $P_0$, we conclude that $\langle x \rangle \Phi (P)/\Phi (P) = \langle x \rangle \Phi (P)/\Phi (P)_{sG /\Phi(P)}$. Thus $H /\Phi (P) \cap T\Phi (P)/\Phi (P) \leq \Phi(H /\Phi (P)) \cdot  \langle x \rangle \Phi (P)/\Phi (P)_{sG /\Phi(P)} = H/\Phi (P)$, i.e. $H/\Phi (P)$ is weakly $S\Phi$-supplemented in $G/\Phi (P)$. 
\begin{itemize}
\item[\textbf{Step 2.}] The condition of $\langle x \rangle_{sG}=1$.
\end{itemize}
It follows from $T \cap \langle x \rangle \leq \Phi (\langle x \rangle) \langle x \rangle_{sG} =1$ that $T <G$. Hence we have that $ H/\Phi (P) \cap T/\Phi (P) = 1 \leq \Phi(H /\Phi (P)) \cdot  \langle x \rangle \Phi (P)/\Phi (P)_{sG /\Phi(P)}$, i.e. $H/\Phi (P)$ is weakly $S\Phi$-supplemented in $G/\Phi (P)$. 

Thus we conclude that $P /\Phi (P)$  is a normal subgroup of $G /\Phi (P)$ satisfying the hypothesis. By induction, we get that every chief factor of $G/\Phi (P)$ below $P /\Phi (P)$ is cyclic, i.e. every chief factor of $G$ below $P$ is cyclic if $\Phi (P) \neq 1$. Now assume that $\Phi (P)=1$. Then $P$ is elementary abelian and let $N$ be a minimal subgroup of $P$. Suppose that that $N$ has a supersoluble supplement $T$ in $G$. If $N \leq T$, then we have that $T = G$ is $p$-supersoluble and we are done. If $N \cap T =1$, it follows directly that $P = P \cap NT = N (P \cap T)$, and $P \cap T < P$. Since $T$ normalizes $T \cap P$, $N$ normalizes $T \cap P$ as $N \leq P$ and $P$ is abelian, we have that $G = NT$ normalizes $P  \cap T$ and so every chief factor of $G$ below $P \cap T$ is cyclic. Since $P /P \cap T$ is of prime order, it implies from generalised Jordan-Holder Theorem that every chief factor of $G$ below $P$ is cyclic. Now suppose that every minimal subgroup $N$ of $P$ has no supersoluble supplement in $G$, then every minimal subgroup $N$ is weakly $S\Phi$-supplemented in $G$. Now assume that every minimal subgroup $\langle x \rangle$ of $P$ is S-permutable in $G$. Let $Q$ be an arbitrary Sylow $q$-subgroup of $G$. Then we get that $\langle x \rangle Q = Q \langle x \rangle$. By minimality of $p$ and {{\cite[Chapter IV, Theorem 2.8]{HU1}}}, we have that $Q \langle x \rangle$ is $p$-nilpotent and so $[Q, \langle x \rangle]=1$. By the choice of $Q$, it indicates that $[G,\langle x \rangle]=1$. By the randomness of $\langle x \rangle$ and ${\rm{exp}} (P)=p$, we conclude that $[G,P]=1$, and so $P \leq Z(G)$. Therefore every subgroup of $P$ is  normal in $G$, hence every chief factor of $G$ below $P$ is cyclic.  Now we may assume that there exists a minimal subgroup $N_0 = \langle x_0 \rangle$ of $P$ such that $N_0$ is not S-permutable in $G$. Then $(N_0)_{sG} = 1$. Therefore we get that there exists $T \leq G$ such that $G = \langle x_0 \rangle T$, and $\langle x_0 \rangle \cap T \leq \Phi (\langle x_0 \rangle ) (N_0)_{sG} = 1$. As above, it follows that $P \cap T$ is a normal subgroup of $G$ such that $P \cap T < P$. Again, by induction we get that every chief factor of $G$ below $T \cap P$ is cyclic. Since $P = N(P\cap T)$, every chief factor of $G$ below $P$ is cyclic and the proof is complete.
\end{proof}
\section{Characterizations for  supersolvability of $\mathcal{F}_S (G)$}
In this section, we investigate the structure of $\mathcal{F}_S (G)$ under the assumption that every minimal subgroup of $S$ is weakly $S\Phi$-supplemented in $G$, and prove the Theorem \ref{66666}. The proof of the theorem strongly relies on the proceeding results we have obtained in Section \ref{3}.
\begin{proof}[Proof of Theorem \ref{66666}]
Assume that the theorem is false, and let $G$ be a counterexample of  minimal order. Now denote $\mathcal{F}_S (G)$ by $\mathcal{F}$.
\begin{itemize}
\item[\textbf{Step 1.}] Let $H$ be a proper subgroup of $G$ such that $S \cap H \in {\rm Syl}_p (H)$ and $|S \cap H| \geq p^2$. Then $\mathcal{F}_{S \cap H} (H)$ is supersolvable.
\end{itemize}

By our hypothesis, every minimal subgroup $T$ of $S \cap H$ is weakly $S\Phi$-supplemented in $G$. Then every cyclic subgroup $T$ of $S \cap H$ with order $p$ or $4$ (If $p =2$)is weakly $S\Phi$-supplemented in  $H$ by lemma \ref{1} (1). Notice that ${\rm exp} (S \cap H)=p$ as ${\rm exp} (S) = p$ and $H \cap S \leq S$, hence  $H$ satisfies the hypothesis of the theorem and it follows from the minimal choice of $G$ that $\mathcal{F}_{S \cap H} (H)$ is supersolvable.
\begin{itemize}
\item[\textbf{Step 2.}] Let $Q \in \mathcal{E}_{\mathcal{F}} ^{*}$, then $|Q| \geq p^2$. If moreover that $Q \not\unlhd G$, then $N_{\mathcal{F}} (Q)$ is supersolvable.
\end{itemize}

Suppose that there exists a subgroup $Q \in \mathcal{E}_{\mathcal{F}} ^{*}$ such that $|Q| < p^2$. Then there is a subgroup $R$ of $S$ such that $|R| = p $, and $Q <R$. It follows directly that $R \leq C_S (Q)$. Since $Q < R \leq S$, we conclude from $Q$ is a member of $\mathcal{E}_{\mathcal{F}} ^{*}$ that $Q$ is $\mathcal{F}$-essential. By the definition, $Q$ is $\mathcal{F}$-centric. Hence $R \leq C_S (Q) =Z(Q) \leq Q$, a contradiction. Thus $|Q| \geq p^2$.

Assume that $Q$ is not normal in $G$. Therefore $N_G (Q)$ is a proper subgroup of $G$. Since $Q \in \mathcal{E}_{\mathcal{F}} ^{*}$, $Q$ is fully $\mathcal{F}$-normalized or $Q=S$. Clearly $S$ is fully $\mathcal{F}$-normalized, hence $Q$ is always fully $\mathcal{F}$-normalized. By the argument below {{\cite[Definition 2.4]{AK}}}, $S \cap N_G (Q) = N_S (Q) \in {\rm Syl}_p (N_G(Q))$. Since $|N_S (Q)| \geq |Q| \geq p^2$, it yields that $N_G (Q)$ satisfies the hypothesis of Step 1, and so $\mathcal{F}_{N_S (Q)} (N_G(Q))=N_{\mathcal{F}} (Q)$ is supersolvable.
\begin{itemize}
\item[\textbf{Step 3.}] $|O_p (G)| \geq p^2$.
\end{itemize}

Assume that there does not exist a subgroup $Q \in \mathcal{E}_{\mathcal{F}} ^{*}$ such that $Q \unlhd G$. Then for each $Q \in  \mathcal{E}_{\mathcal{F}} ^{*}$, the fusion system $N_{\mathcal{F}} (Q)$ is supersolvable by Step 2. By Lemma \ref{111111}, $\mathcal{F}$ is supersolvable, a contradiction. Thus there exists a subgroup $Q \in \mathcal{E}_{\mathcal{F}} ^{*}$ such that $Q \unlhd G$. Hence we conclude from Step 2 that $|O_p (G)| \geq |Q| \geq p^2$.
\begin{itemize}
\item[\textbf{Step 4.}] $O_p (G) \leq Z_{\mathfrak{U}} (G)$.
\end{itemize}

It follows from $|O_p (G)| \geq p^2$ that  any minimal subgroup $T$ of $O_p (G)$   is weakly $S\Phi$-supplemented in $G$. Notice that ${\rm exp} (O_p (G)) = p$ as $O_p (G) \leq S$ and ${\rm exp} (S) = p$,  it yields from Theorem \ref{3.4} that every chief factor of $G$ below $P$ is cyclic. Therefore, for any chief factor $H / K$ below $O_p (G)$, it follows from the fact $H / K$ is cyclic  that $H / K$ is of order $p$. Consider the semidirect product $ U = H / K \rtimes G / C_G (H / K)$, then we conclude from $|G|$ is a multiple of   $|G /C_G (H / K)|$ that the subgroup $ G /C_G (H / K)$ has index $p$ in $U$, which is the smallest prime divisor of $|U|$ as well. Thus $G /C_G (H / K)$ is normal in $U$, and we obtain from the fact $G /C_G (H / K) \lesssim  {\rm Aut} (H / K)$ that $G /C_G (H / K)$ is abelian, and so $U$ is supersolvable. Hence by the choice of $H/K$, $O_p (G)$ is $\mathfrak{U}$-hypercentral in $G$, which indicates that   $O_p (G) \leq Z_{\mathfrak{U}} (G)$ and this part is complete.
\begin{itemize}
\item[\textbf{Step 5.}] Final contradiction.
\end{itemize}

Suppose that $H$ is a proper subgroup of $G$ such that $O_p (G) < S \cap H$ and $S \cap H \in {\rm Syl}_p (H)$. By Step 1 and Step 3, $|S \cap H| >|O_p (G)| \geq p^2$ and so $\mathcal{F}_{S \cap H} (H)$ is supersolvable. Since $O_p (G) \leq Z_{\mathfrak{U}} (G)$ by Step 4, it follows directly from Lemma \ref{222222} that $\mathcal{F}_S (G)$ is supersolvable, a contradiction. Hence our proof is complete.
\end{proof}
As a direct application of the theorem above, we obtain the following characterization for the structure of finite groups under the assumption that all minimal subgroups of a Sylow $p$-subgroup $S$ are weakly $S\Phi$-supplemented in $G$. 
\begin{corollary}
Let $G$ be a finite group and $S$ a Sylow $p$-subgroup of $G$, where $p$ is the smallest prime divisor of $|G|$. Suppose that ${\rm exp} (S) = p$, and every minimal subgroup of $S$  is weakly $S\Phi$-supplemented in $G$, then $G$ is $p$-nilpotent.
\end{corollary}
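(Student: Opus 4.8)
The plan is to derive this from Theorem~\ref{66666} by a minimal-counterexample argument combined with the standard structure theory of minimal non-$p$-nilpotent groups. Assume the statement fails and let $G$ be a counterexample of smallest order.

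First I would check that every proper subgroup of $G$ is $p$-nilpotent, so that $G$ is minimal non-$p$-nilpotent. Given $H<G$, replacing $H$ by a $G$-conjugate (harmless, since $p$-nilpotency is a conjugacy invariant) we may assume $S\cap H\in{\rm Syl}_p(H)$; if $p\nmid|H|$ then $H$ is a $p'$-group and trivially $p$-nilpotent, so assume $p\mid|H|$, whence $p$ is still the smallest prime divisor of $|H|$. Now ${\rm exp}(S\cap H)\le{\rm exp}(S)=p$, and every minimal subgroup of $S\cap H$ is a minimal subgroup of $S$, hence weakly $S\Phi$-supplemented in $G$ and therefore in $H$ by Lemma~\ref{1}(1). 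Thus $H$ inherits the hypotheses, so by minimality of $G$ it is $p$-nilpotent. Applying \cite[Chapter IV, Theorem 5.4]{HU1} and \cite[Chapter IV, Theorem 3.4.11]{GW}, exactly as in Step~1 of the proof of Theorem~\ref{3.1}, we get $G=P\rtimes Q$ where $P=O_p(G)$ is the normal Sylow $p$-subgroup (so $P=S$), $Q$ is a cyclic Sylow $q$-subgroup for some prime $q\neq p$, the quotient $P/\Phi(P)$ is a chief factor of $G$, and ${\rm exp}(P)=p$ (using ${\rm exp}(S)=p$).

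Next I would apply Theorem~\ref{66666} directly to $G$, whose hypotheses hold by assumption, to conclude that $\mathcal{F}_S(G)=\mathcal{F}_P(G)$ is supersolvable, say via a chain $1=P_0\le P_1\le\cdots\le P_n=P$ of strongly $\mathcal{F}$-closed subgroups with cyclic quotients. The key point to verify is that, because $P\unlhd G$, a subgroup $A\le P=S$ is strongly $\mathcal{F}_P(G)$-closed precisely when $A\unlhd G$: every $G$-conjugate of an element of $A$ already lies in $P$, so strong closure coincides with $G$-invariance. Hence the $P_i$ are normal in $G$; refining the chain (inserting a normal subgroup between consecutive terms keeps the factors cyclic) gives a $G$-chief series of $P$ all of whose factors are cyclic, hence of order $p$. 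In particular $|P/\Phi(P)|=p$, so $P$ is cyclic by the Burnside basis theorem, and ${\rm exp}(P)=p$ forces $|P|=p$.

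Finally, $G=C_p\rtimes Q$ with $Q$ a $q$-group and $q>p$ since $p$ is the smallest prime divisor of $|G|$; as $q\nmid p-1=|{\rm Aut}(C_p)|$, the action of $Q$ on $P$ is trivial, so $Q\unlhd G$ and $G$ is nilpotent, contradicting that $G$ is not $p$-nilpotent. Hence no counterexample exists. Beyond Theorem~\ref{66666}, the only genuinely new ingredients are the identification of the strongly $\mathcal{F}$-closed subgroups of $\mathcal{F}_P(G)$ with the normal subgroups of $G$ contained in $P$, and the routine but attention-demanding conjugation step in the inheritance argument; I expect the latter to be the easiest place to slip, with the rest being essentially bookkeeping.
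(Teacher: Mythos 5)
Your proposal is correct, but it reaches the conclusion by a genuinely different route from the paper. The paper's proof is two lines: apply Theorem~\ref{66666} to get that $\mathcal{F}_S(G)$ is supersolvable, then invoke \cite[Theorem 1.9]{SN}, which says that supersolvability of $\mathcal{F}_S(G)$ at the smallest prime divisor forces $p$-nilpotency of $G$. You instead re-derive that implication in the situation at hand: a minimal-counterexample reduction shows $G$ is minimal non-$p$-nilpotent, hence $G=P\rtimes Q$ with $P=S\unlhd G$ and $P/\Phi(P)$ a chief factor; you then apply Theorem~\ref{66666}, observe that when the Sylow subgroup is normal the strongly $\mathcal{F}_P(G)$-closed subgroups are exactly the normal subgroups of $G$ inside $P$, refine the resulting series to conclude $|P/\Phi(P)|=p$, hence $|P|=p$ by the Burnside basis theorem and ${\rm exp}(P)=p$, and finish with $q\nmid p-1$. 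All the steps check out (the inheritance argument via Lemma~\ref{1}(1), the identification of strong closure with $G$-invariance under $P\unlhd G$, and the refinement to chief factors of order $p$ are each correct), so your argument is a valid, essentially self-contained substitute for the citation of Su's theorem; what it costs is the extra minimal-counterexample machinery and the structure theory of minimal non-nilpotent groups, and what it buys is independence from \cite[Theorem 1.9]{SN}. Note also that your reduction only needs Theorem~\ref{66666} applied to the counterexample $G$ itself, so there is no circularity.
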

\begin{proof}
It follows from Theorem \ref{66666} that $\mathcal{F}_S (G)$  is supersolvable. Since $p$ is the smallest prime divisor of $|G|$, we conclude from {{\cite[Theorem 1.9]{SN}}} that $G$ is $p$-nilpotent, as desired.
\end{proof}
\!\!\!\!\!\!\!\!\!\textbf{Availability of data and materials:} Not applicable.
\\ \quad \\
\!\!\!\!\!\!\!\!\!\textbf{Ethical Approval:} Not applicable.
\\ \quad \\
\!\!\!\!\!\!\!\!\!\textbf{Declarations of interest:} Not applicable.
\\ \quad \\
\!\!\!\!\!\!\!\!\!\textbf{Funding:} Not applicable.


\begin{thebibliography}{0}

\bibitem{MA2} M. Asaad, The Influence of Weakly $s$-Supplemented Subgroups on the Structure of Finite Groups. {\it Comm. Algebra}, \textbf{42} (2014) 2319-2330.
\bibitem{MA} M. Asaad, The influence of weakly $S\Phi$-supplemented subgroups on the structure of finite groups. {\it Publ. Math. Debrecen}
\textbf{102} (2023) 189–195.
\bibitem{AK} M. Aschbacher, R. Kessar, B. Oliver, Fusion Systems in Algebra and Topology, Cambridge University Press, New York, 2011.

\bibitem{FJ} F. Aseeri, J. Kaspczyk, Criteria for supersolvability of saturated fusion systems. https://doi.org/10.48550/arXiv.2305.09008.

\bibitem{BA} A. Ballester-Bolinches, L. M. Ezquerro, A. N. Skiba,  Subgroups of finite groups with a strong cover-avoidance property.
{\it Bull. Aust. Math. Soc.} \textbf{79} (2009) 499-506 .
\bibitem{GL} X. Guo, J. Lu, On $SS$-supplemented subgroups of finite groups and their properties. {\it Glasgow Math. J.} \textbf{54} (2012) 481–491.

\bibitem{GW} W. Guo, The Theory of Classes of Groups. Kluwer Academic, Dordrecht, 2000.
\bibitem{HU1} B. Huppert, Endliche Gruppen I. Springer, New York, 1967.
\bibitem{HU3} B. Huppert, N. Blackburn, Finite Groups III. Springer, Berlin, 1982.
\bibitem{KS} H. Kurzweil, B. Stellmacher, The Theory of Finite Groups. Springer New York, 2004.
\bibitem{LX} J. Li, F. Xie,  On inequalities of subgroups and the structure of finite groups. {\it J. Inequal. Appl.} \textbf{427} (2013) 1-7.
\bibitem{LZ} X. Li, T. Zhao, $S\Phi$-supplemented subgroups of finite groups,  {\it Ukr. Math. J.} \textbf{64}  (2012) 102-109.
\bibitem{AS} A. N. Skiba, On weakly $s$-permutable subgroups of finite groups. {\it J. Algebra}, \textbf{315} (2007) 192–209.
\bibitem{SN} N. Su, On supersolvable saturated fusion systems, {\it Monatsh Math} \textbf{187} (2018) 171–179.
\bibitem{WY} Y. Wang, A remark on the $c$-normality of maximal subgroups of finite groups. {\it Proc. Edinb. Math. Soc.}  \textbf{40} (1997) 243-246.
\bibitem{WU} Z. Wu, Y. Mao, W. Guo, On weakly $S\Phi$-supplemented subgroups of finite groups, {\it Publ. Math. Debrecen}, \textbf{57} (2016) 696-703.



\end{thebibliography}
\end{document}